\documentclass[11pt,letterpaper]{amsart}

\usepackage{amsfonts,amsthm,amsmath,amsfonts,latexsym,amssymb}

\usepackage{graphicx,upref,calligra,float}
\usepackage[svgnames]{xcolor}
\usepackage{mathrsfs}

\usepackage[T1]{fontenc}

\usepackage[applemac]{inputenc}
\usepackage[svgnames]{xcolor}
\usepackage{colortbl}
\usepackage[colorlinks=true,linkcolor=azul,citecolor=azul]{hyperref}

\theoremstyle{plain}
\newtheorem{thm}{Theorem}[section]
\newtheorem{lem}[thm]{Lemma}
\newtheorem{cor}[thm]{Corollary}
\newtheorem{prop}[thm]{Proposition}
\theoremstyle{definition}
\newtheorem{rem}[thm]{Remark}

\newtheorem{exas}[thm]{Examples}
\numberwithin{equation}{section}


\definecolor{azul}{rgb}{0.1,0.6,0.86}
\definecolor{titleblue}{rgb}{0.13,0.49,0.69}
\definecolor{mylred}{rgb}{0.85,0.24,0.2}
\definecolor{myblue}{rgb}{0,0.33,0.55}
\definecolor{myyellow}{rgb}{0.42,0.24,0.52}
\definecolor{mygreen}{rgb}{0.12,0.5,0.29}
\definecolor{myred}{rgb}{0.74,0.13,0.13}
\definecolor{mylblue}{rgb}{0.2,0.75,1}
\definecolor{mylgreen}{rgb}{0.68,0.98,0.6}
\definecolor{mylyellow}{rgb}{0.86,0.85,0.55}
\definecolor{myllyellow}{rgb}{0.87,0.86,0.56}
\definecolor{naranja}{RGB}{249,153,96}
\definecolor{sidebardarkcolor}{rgb}{0.21,0.31,0.40}
\definecolor{sidebarlightcolor}{rgb}{0.7,0.77,0.836}


\evensidemargin 30pt
\oddsidemargin 30pt
\textwidth 410pt 
\textheight 615pt


\def\bdem{\begin{proof}}
\def\edem{\end{proof}}
\def\ds{\displaystyle}

\newcommand{\sub}[2]{{#1}_{\mbox{\tiny{${#2}$}}}}


\def\la{\lambda}

\def\W{\Omega}


\def\N{\mathbb{N}}

\def\R{\mathbb{R}}

\def\C{\mathbb{C}}

\def\H{\mathbb{H}}

\def\1{\mathds{1}}


\def\cC{\mathcal{C}}

\def\H{\mathcal{H}}
\def\cH{\mathcal{H}}






\DeclareMathOperator\arctanh{arctanh}
\DeclareMathOperator\Vol{Vol}



\newcommand{\pint}[1]{\displaystyle \left \langle\, #1 \, \right\rangle}




\newcommand{\bm}[1]{\mathcal{B}_{#1}}

\newcommand{\isob}[1]{\mathcal{I}(\bm{#1})}


\setlength{\parindent}{0pt}

\begin{document}

\title{Weighted maximal inequalities on hyperbolic spaces } 
	
	\author{Jorge Antezana}
	\address{Departamento de Matem\'atica, Universidad Aut\'onoma de Madrid; Centro de Matem\'atica de La Plata, FCE-UNLP; Ins\-ti\-tu\-to Argentino de Matem\'atica ``Alberto P. Calder\'on'' - CONICET.
		}
	\email{jorge.antezana@uam.es}
	
	\author{Sheldy Ombrosi}
	\address{Departamento de An\'alisis Matem\'atico y Matem\'atica Aplicada\\ Universidad Complutense, Spain \&
Departamento de Matem\'atica e Instituto de Matem\'atica. Universidad Nacional del Sur - CONICET, Argentina}
	\email{sombrosi@ucm.es}

\date{}

\begin{abstract} 

 In this work we develop a weight theory in the setting of hyperbolic spaces. Our starting point is a variant of the well-known endpoint Fefferman-Stein inequality for the centered Hardy-Littlewood maximal function. This inequality generalizes, in the hyperbolic setting, the weak $(1,1)$ estimates obtained by Str\"omberg in \cite{Str} who answered a question posed by Stein and Wainger in \cite{SW}. Our approach is based on a combination of geometrical arguments and the techniques used in the discrete setting of regular trees by Naor and Tao in \cite{NT}. This variant of the Fefferman-Stein inequality paves the road to weighted estimates for the maximal function for $p>1$. On the one hand, we show that the classical $A_p$ conditions are not the right ones in this setting. 
On the other hand, we provide sharp sufficient conditions for weighted weak and strong type $(p,p)$ boundedness of the centered maximal function, when $p>1$. The sharpness is in the sense that, given $p>1$, we can construct a weight satisfying our sufficient condition for that $p$, and so it satisfies the weak type $(p,p)$ inequality, but the strong type $(p,p)$ inequality fails. In particular, the weak type $(q,q)$ fails as well for every $q < p$. 
\end{abstract}

\maketitle

\baselineskip=17pt
	\let\thefootnote\relax\footnote{ 
           \noindent  2020 {\it Mathematics Subject Classification: 43A85}
            
		\noindent {\it Keywords: Hyperbolic space, Fefferman-Stein inequality, weighted estimates}.
		
		J. Antezana was supported by grants: PICT 2019 0460 (ANPCyT), 
		PIP112202101 00954CO (CONICET),
		11X829 (UNLP),
		PID2020-113048GB-I00 (MCI).
		S. Ombrosi was supported by PID2020-113048GB-I00 (MCI).
	}


\section{Introduction}

Let $\H^n$ denote the $n$-dimensional hyperbolic space, i.e. the unique (up to isometries) $n$-dimensional, complete, and simply connected Riemannian manifold with constant sectional curvature $-1$. Let $\mu_n$ denote the corresponding volume measure. If $\sub{B}{H}(x,r)$ denotes the hyperbolic ball of radio $r$ centered at $x$, then the centered Hardy{-}Littlewood maximal function on $\H^n$
is defined as 
\[
Mf(x)=\sup_{r>0}\frac{1}{\mu_{n}(B_{H}(x,r))}\int_{B_{H}(x,r)}|f(y)|d\mu_{n}(y).
\]

In the seminal work  \cite{SW}, Stein and Wainger proposed the study of the end-point estimates for the centered Hardy-Littlewood maximal function when the curvature of the underline space could be non-negative. In this more general scenario, the euclidean spaces $\R^n$ and the aforementioned hyperbolic spaces $\H^n$ represent two extreme cases. 

\medskip

In \cite{Str}, Str\"omberg proved the (unweighted) weak type $(1,1)$ boundedness of $M$ in symmetric spaces of noncompact type, suggesting that the behavior of the maximal operator is the same in both spaces, $\R^n$ and $\H^n$. However, this is not the case in general, and it will be reveled by analyzing weighted estimates. More precisely, to complete the answer to Stein-Wainger's question we study  an end-point two-weight Fefferman-Stein inequality for $M$ in the hyperbolic setting.

\subsection{Fefferman Stein type inequality}
In the Euclidean setting, the classical Feffer\-man Stein inequality \cite{FS} is
$$
   w\left(\left\{ x\in \R^n \,:\,Mf(x)>\lambda\right\} \right)\lesssim \frac{1}{\lambda} \int_{\R^n}|f(x)|\,Mw(x)dx,
$$
where  $w$ is  non-negative measurable function (a weight) defined in $\R^n$, and  $w(E)=\int_E w(x) dx$. This is a cornerstone in the theory of weights, and a powerful tool to consider vector valued extension of the maximal function $M$. This result follows from a classical covering lemma,  which is not available in the hyperbolic setting. Indeed, in this setting
\begin{equation}\label{lanza la bola chico}
\mu_n \Big(\sub{B}{H}(x,r)\Big)= \Omega_n \int_0^r (\sinh t)^{n-1} dt \sim_n \frac{r^n}{1+r^n} e^{(n-1)r},
\end{equation}
where $\Omega_n$ is the euclidean $(n-1)$-volume of the sphere $S^{n-1}$, and the subindex in the symbol $\sim$ means that the constant behind this symbol depends only on the dimension $n$. 
This exponential behaviour, as well as the metric properties of $\cH^n$, make the classical covering arguments fail. In consequence, it is unclear how to decompose the level set $\left\{ x\in \H^n \,:\,Mf(x)>\lambda\right\}$ in such way that the appropriate averages of $w$ appear. 

\medskip

As in the euclidean case, from now on, given a non-negative measurable function $w$ (a weight) defined on $\H^n$, let $w(E)=\int_E w(x) d\mu_{n}(x)$  for a measurable set $E\subset \H^n$. On the other hand, given $s>1$, let 
$$
M_{s}w=M(w^{s})^{1/s}.
$$ 
Using this notation, our first main result is the following variant of the Fefferman-Stein inequality. 

\begin{thm} \label{F-S} For every weight $w\geq0$ 
we have that 
\[
w\left(\left\{ x\in \H^n \,:\,Mf(x)>\lambda\right\} \right)\leq C_{s,n}\frac{1}{\lambda}\int_{\H^n}|f(x)|M_{s}w(x)d\mu_{n}(x)
\]
where the constant $C_{s,n}\rightarrow+\infty$ when $s\rightarrow1$. 
\end{thm}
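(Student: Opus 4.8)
The plan is to split the maximal operator according to the scale at which the supremum is essentially realized, and to treat the two regimes by entirely different mechanisms. Fix a threshold, say $r_0=1$, and write $M\le\max(M^{\mathrm{loc}},M^{\mathrm{glob}})$, where $M^{\mathrm{loc}}$ is the supremum over radii $r\le r_0$ and $M^{\mathrm{glob}}$ the supremum over radii $r>r_0$. Then $\{Mf>\lambda\}\subset\{M^{\mathrm{loc}}f>\lambda\}\cup\{M^{\mathrm{glob}}f>\lambda\}$, so it is enough to bound the $w$-measure of each piece by the right-hand side of the stated inequality.

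For the \emph{local} part one checks from \eqref{lanza la bola chico} that, restricted to radii $r\le r_0$, the measure $\mu_n$ is doubling with a constant depending only on $n$ and $r_0$; equivalently, $\H^n$ has bounded geometry at unit scale. Hence the classical Vitali covering lemma is available for balls of radius $\le r_0$ and the original Fefferman--Stein argument of \cite{FS} applies essentially verbatim: from the balls witnessing $M^{\mathrm{loc}}f(x)>\lambda$ one extracts a disjoint subfamily $\{B_j\}$ whose fixed dilates cover the level set, obtaining
\[
w\big(\{M^{\mathrm{loc}}f>\lambda\}\big)\le \frac{C_n}{\lambda}\int_{\H^n}|f|\,Mw\,d\mu_n\le \frac{C_n}{\lambda}\int_{\H^n}|f|\,M_sw\,d\mu_n,
\]
the last inequality by Jensen, since $Mw\le M_sw$ for every $s\ge 1$. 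Observe that this regime imposes no restriction on $s$ and contributes a constant that stays bounded as $s\to1$.

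The \emph{global} part is the heart of the matter and the only place where the hypothesis $s>1$ and the blow-up of $C_{s,n}$ enter. By \eqref{lanza la bola chico} one has $\mu_n(B_H(x,r))\sim_n e^{(n-1)r}$ for $r>r_0$, so the doubling constant is exponential in the radius: enlarging the radius of a ball by a fixed amount $\rho$ multiplies its measure by $e^{(n-1)\rho}$, and no Vitali or Besicovitch selection can control the overlaps uniformly. Following the tree techniques of Naor--Tao \cite{NT}, the idea is to exploit the tree-like (Gromov-hyperbolic) structure of $\H^n$ at large scales: one organizes the admissible large balls into additive scales $r\in[k,k+1)$, $k\ge1$, and at each scale performs a stopping-time/localization selection adapted to the \emph{nesting} of large balls rather than to a Euclidean covering. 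The passage from $w$ on a selected ball $B$ to a pointwise average is then done through Hölder's inequality with exponent $s$,
\[
w(B)\le \mu_n(B)\Big(\tfrac{1}{\mu_n(B)}\int_B w^s\,d\mu_n\Big)^{1/s}\le \mu_n(B)\,\inf_{B}M_sw ,
\]
so that the extra integrability encoded by $s>1$ provides, after summation over the scales $k$, a geometric factor of the form $\sum_{k\ge1}e^{-(s-1)c_nk}\sim(1-e^{-(s-1)c_n})^{-1}$; this series converges precisely when $s>1$ and diverges as $s\to1^+$, which is exactly the source of the constant $C_{s,n}\to+\infty$.

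I expect the main obstacle to be precisely this global estimate, namely finding the correct substitute for the (unavailable) covering lemma. The selection must be arranged so that, simultaneously, (i) the chosen large balls have overlap controlled when weighed against $|f|$, recovering in an averaged sense the disjointness used in the classical argument, and (ii) the exponential volume factor created by enlarging radii or displacing centers is absorbed by the gain coming from Hölder, leaving a convergent geometric series in the scale. Transporting the Naor--Tao tree localization to the continuous hyperbolic setting through \eqref{lanza la bola chico}, so that both requirements hold at once, is the technical core of the proof. Once the local and global level sets are both controlled, adding the two estimates yields the theorem with $C_{s,n}=C_n+C_n'(s)$, where $C_n'(s)\to+\infty$ as $s\to1$.
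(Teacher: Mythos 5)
Your local/global splitting, your treatment of $M^{loc}$ (unit-scale doubling plus the classical Fefferman--Stein covering argument, then $Mw\le M_sw$), and your diagnosis of where $s>1$ must enter (a geometric series in the scale whose ratio degenerates as $s\to1$; note $1/s'\sim s-1$) all agree with the paper's outline. But there is a genuine gap: the global estimate, which you yourself call ``the technical core,'' is never actually proved. Everything after ``Following the tree techniques of Naor--Tao'' is a list of properties that a hypothetical stopping-time/ball-selection \emph{would have to} satisfy --- controlled overlaps weighted against $|f|$, absorption of the exponential volume factors --- with no construction and no argument that such a selection exists. Since the entire difficulty of the theorem is concentrated in exactly this point (the paper stresses that covering arguments fail in $\H^n$), the proposal does not establish the statement; it reduces it to an unproved claim that is at least as hard as the theorem itself.

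It is also worth noting that the paper's actual mechanism for the far part is \emph{not} a selection or covering argument of the kind you postulate. No balls are ever selected. Instead, the key object is a bilinear estimate on pairs of measurable sets for the averaging operators (Lemma \ref{Borders}):
\begin{equation*}
\int_{F}A_r(\chi_E)\,w\,d\mu_n\;\le\; c_{s,n}\,e^{-(n-1)\frac{r}{s'+1}}\,w(F)^{\frac{1}{s'+1}}\,M_{s}w(E)^{\frac{s'}{s'+1}},
\end{equation*}
whose proof combines the ball-intersection bound $\mu_n\big(\sub{B}{H}(y,s)\cap\sub{B}{H}(x,r)\big)\le C_n e^{\frac{n-1}{2}(r+s-d_n(x,y))}$ (Proposition \ref{intersection of balls}) --- the precise quantitative form of the ``tree-likeness'' you invoke --- with a decomposition into the annuli $\cC_j$, H\"older with exponent $s$ on each piece, and an elementary optimization (Lemma \ref{opti}). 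Then, in place of selecting balls, the \emph{function} is sliced into exponential level sets $E_k=\{e^{(n-1)(k-1)}\le A_1f<e^{(n-1)k}\}$: if $A_r(A_1f)\ge 1$, some $E_k$ must have a large $A_r$-average, and applying the bilinear estimate to each pair $(E_k,F_k)$ yields the distributional bound of Lemma \ref{lem:sumLevels}; summing over integer radii $r\ge k$ produces the geometric series with ratio $e^{-(n-1)/(2s')}$, which is the true source of $C_{s,n}\to\infty$. Finally, your sketch misses a technical step forced by the non-atomicity of $\mu_n$: the whole argument runs on the pre-averaged functions $A_1f$, $A_2f$ rather than on $f$, and to return to $\int |f|\,M_sw\,d\mu_n$ at the end one needs the pointwise self-improvement $A_2(M_sw)\lesssim_s M_sw$, which requires its own proof (splitting $w^s$ near and far from the ball, plus Kolmogorov's inequality and the unweighted weak $(1,1)$ bound).
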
  

\medskip

This theorem is a generalization of the result of Str\"omberg \cite{Str}, and as far as we know, it represents the first result for general weights in the hyperbolic setting. The reader may wonder if this result could hold for $s=1$. We will show that this result is false in general if $s=1$ (see Example \ref{Exa} item \ref{Ex1} below). Moreover, our example shows that it is false, even if we put iterations of the maximal function in the right hand side. In some sense, this is an evidence of a stronger singularity of the maximal function in the hyperbolic setting.  In Section 4  we will show that there are non trivial weights satisfying the pointwise condition $M_s(w)(x)\leq C w(x)$  a.e $x\in \H^n$. Then, for these weights it holds that the maximal function $M$ satisfies the weak type $(1,1)$ respect to the measure $wd\mu_{n}$. 

\bigskip

\subsubsection*{About the proof of Theorem \ref{F-S}} For each $r> 0$, let $A_{r}$ be the averaging
operator 
\[
A_{r}f(x)=\frac{1}{\mu_{n}(B_H(x,r))}\int_{B_H(x,r)}|f(y)|\,d\mu_n(y).
\]
Hence $Mf(x)=\sup_{r\ge0}A_{r}f(x)$. If $M^{loc}(f)$ denotes the operator obtained if supremum is restricted to $r \leq 2$, and $M^{far}(f)$ denotes the operator obtained if the supremum is taken over all $r \ge 2$, then  
$$
Mf(x)\leq M^{loc}f(x)+M^{far}f(x).
$$ 
On the one hand, the operator $M^{loc}$ behaves as in the Euclidean setting. The main difficulties appear in the estimations of $M^{far}$. In \cite{Str}, 
Str\"omberg uses a pointwise inequality obtained by Clerc and Stein in \cite{CS}. This pointwise inequality reduced the problem to get a good estimate for a convolution operator associated with a $k$-bi-invariant kernel $\tau$, which in the case of hyperbolic setting is $\tau(z,w)=(1+\mu_n(B(0,d(z,w))^{-1}$. A similar approach was used by Li and Lohou\'e in \cite{LiLo} to obtain sharp constants with respect to the dimension $n$. However, Str\"omberg's argument strongly uses the homogeneity of the measure $\mu_n$. So, it is not clear that one can apply a similar idea in the general case of any weight $w$. This makes it necessary to look for a more flexible approach.

\medskip

Our general strategy is based in the scheme used by Naor and Tao in \cite{NT}, where the weak type $(1,1)$ of the centered maximal function on the discrete setting of rooted $k$-ary trees is obtained. The flexibility of this approach was shown in \cite{ORS} and \cite{OR}, where the authors used this approach to get weighted estimates in the same discrete setting. It is well known that regular trees can be thought as discrete models of the hyperbolic space. Moreover, this kind of heuristic was used by Cowling, Meda and Setti in \cite{CMS}, but in the other way round, that is, in this work the authors used Str\"omberg's approach to prove weak estimates in the setting of trees. A novelty of our paper is to bring ideas of the discrete setting to the continue hyperbolic context. Adapting this strategy to a continuous context requires overcoming certain obstacles. On the one hand, the combinatorial arguments used in the discrete setting of trees are not longer available, so they have to be replaced by geometrical arguments. In this sense, the following estimate (Proposition \ref{intersection of balls})
$$
\mu_n\Big( \sub{B}{H}(y,s)\cap\sub{B}{H}(x,r)\Big)\leq C_n e^{\,\frac{n-1}{2}(\,r+s-d_{n}(x,y)\,)}
$$
is behind many estimates, as well as, some examples. It will also play a key role in the inequality
\[
\int_{F}A_r(\chi_E)(y) w(y) d\mu_n(y)\le c_{s,n}\ e^{-(n-1)\frac{r}{s'+1}}w(F)^{\frac{1}{s'+1}}M_{s}w(E)^{\frac{s'}{s'+1}},
\]
that is very important to prove Theorem \ref{F-S}. In this inequality, $E$ and $F$ are measurable subsets of $\H^n$, $s>1$, $s'=\frac{s}{s-1}$, and $r$ is a positive integer. On the other hand, in our setting the measure is not atomic. This leads us to make some estimations on some convenient averages of the original function instead of the function itself (see for instance Lemma \ref{lem:sumLevels}). 

\subsection{Weighted estimates in the hyperbolic space for $p>1$}
In the Euclidean case, the weak and strong boundedness of the maximal operator $M$ in weighted $L^p$ spaces is completely characterized by the $A_p$ condition defined in the seminal work of Muckenhoupt \cite{Mu}:
\begin{equation}\label{Ap Rn}
\sup \left(\frac{1}{|B|}\int_{B}w\, dx\right)\left(\frac{1}{|B|}\int_{B}w^{-\frac{1}{p-1}}\, dx\right)^{p-1}<\infty,
\end{equation}
where the supremum is taken over all the Euclidean balls. Different type of weighted inequalities were proved for measures such that the measure of the balls grows polynomically with respect to the radius (see for instance \cite{GCM}, \cite{NTV}, \cite{OP}, \cite{TTV}, and \cite{To}). However, the techniques used in those works can not be applied in our framework because of the geometric properties of $\cH^n$ and the exponential growth of the measures of balls with respect to the radius. Unweighted strong $(p,p)$ inequalities for the maximal function were proved for $p>1$ by Clerc and Stein in \cite{CS}. Moreover, singular integral operators also were studied on symmetric spaces by Ionescu (\cite{Io1,Io2}). 

\medskip

Roughly speaking, in the hyperbolic spaces, the behaviour of the maximal function is a kind of combination of what happens in the Euclidean case and in the trees. More precisely, recall that we have defined the operators
$$
M^{loc}f(x)=\sup_{0<r\leq 2}A_{r}f(x)\quad\mbox{and}\quad M^{far}f(x)=\sup_{2<r}A_{r}f(x).
$$
As we have already mentioned, the operator $M^{loc}$ behaves as if it were defined in the Euclidean space. So, it is natural to expect that it boundedness could be controlled by a kind of ``local $A_p$ condition''. We say that a weight  $w\in A_{p,loc}(\H^n)$ if 
\begin{equation*}
\sup_{0<r(B)\leq 1 }\left(\frac{1}{\mu_{n}(B)}\int_{B}w \mu_{n}\right)\left(\frac{1}{\mu_{n}(B)}\int_{B}w^{-\frac{1}{p-1}} \mu_{n}\right)^{p-1}<\infty.\label{eq:ApM}
\end{equation*}

\bigskip

The situation is very different for large values of the radius, when the hyperbolic structure comes into play. For instance, it is not difficult to show that the natural $A_p$ condition is too strong for the boundedness of $M^{far}$ in the hyperbolic setting. Indeed, in the Example \ref{Exa} we show a weight for which the maximal function is bounded in all the $L^p$-spaces, but it does not belong to any (hyperbolic) $A_p$ class. This suggests to follow a different approach. Inspired by the condition introduced in \cite{OR}, in the case of $k$-ary trees, we are able to define sufficient conditions to obtain weak and strong estimates for the maximal function respect to a weight $w$. Our main result in this direction is the following:

\begin{thm}\label{p>1}
\label{thm:Suff} Let $p>1$ and $w$ a weight. Suppose that
\begin{enumerate}
    \item[i.)]  $w\in A_{p,loc}(\H^n)$.
    \item[ii.)] There exist $0<\beta<1$ and $\beta\leq\alpha<p$ such that  for every $r\ge 1$ we have 
\begin{equation}
\int_{F} A_r(\chi_E)(y) w(y) d\mu_n(y) \lesssim e^{(n-1)r(\beta-1)}w(E)^{\frac{\alpha}{p}}w(F)^{1-\frac{\alpha}{p}},\label{SuffCond}
\end{equation}
for any pair of measurable subsets $E,F\subseteq \H^n$.
\end{enumerate}
Then
\begin{equation}
\|Mf\|_{L^{p,\infty}(w)}\lesssim\|f\|_{L^{p}(w)}.\label{eq:Weakbpalpha}
\end{equation}
Furthermore, if $\beta<\alpha$ then for each fixed $\gamma\ge 0$ we have 
\begin{equation}\label{sumAj}
\sum_{j=1}^{\infty}j^{\gamma}\|A_{j}f\|_{L^{p}(w)}\lesssim\|f\|_{L^{p}(w)}.
\end{equation}
And therefore
\begin{align*}
\|Mf\|_{L^{p}(w)}&\lesssim\|f\|_{L^{p}(w)}, \\  \|Mf\|_{L^{p'}(\sigma)}&\lesssim\|f\|_{L^{p'}(\sigma)},    
\end{align*}
where $\sigma=w^{{1-p'}}$ and $p'=\frac{p}{p-1}$.
\end{thm}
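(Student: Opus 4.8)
The plan is to follow the splitting $Mf\le M^{loc}f+M^{far}f$ introduced before the statement, bound the two pieces separately, and recombine. For the local part, the volume formula \eqref{lanza la bola chico} shows that on the range $0<r\le 2$ the quantity $\mu_n(B_H(x,r))$ is comparable to its Euclidean analogue, and in particular that the family $\{B_H(x,r):0<r\le2\}$ is doubling with a uniform constant. Thus the hypothesis $w\in A_{p,loc}(\H^n)$ puts us in a purely Euclidean-type situation, and the standard Calder\'on--Zygmund/Muckenhoupt machinery (Vitali covering at small scales together with a local reverse H\"older inequality) yields both the weak and the strong type $(p,p)$ for $M^{loc}$ with respect to $w$. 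All the genuinely hyperbolic difficulty is therefore concentrated in $M^{far}$, and the rest of the plan concerns the integer averages $A_j$.

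For $M^{far}$ I would first discretize the scale. Since \eqref{lanza la bola chico} gives $\mu_n(B_H(x,j+1))\sim_n e^{\,n-1}\,\mu_n(B_H(x,j))$ for integers $j\ge1$, one has $A_rf(x)\lesssim_n A_{j+1}f(x)$ whenever $r\in[j,j+1)$, so that $M^{far}f\lesssim_n\sup_{j\ge1}A_jf\le\sum_{j\ge1}A_jf$ and it suffices to control the $A_j$. The crucial observation is that \eqref{SuffCond} is exactly a restricted (testing) bound: writing $q_0=p/\alpha$ and $q_0'=p/(p-\alpha)$, it reads $\int_F A_j\chi_E\,w\,d\mu_n\lesssim c_j\,w(E)^{1/q_0}w(F)^{1/q_0'}$ with $c_j\sim e^{-(n-1)(1-\beta)j}$; equivalently, each $A_j$ is of restricted strong type $(q_0,q_0)$ with respect to the measure $w\,d\mu_n$, with a constant that decays geometrically in $j$ because $\beta<1$. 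The weak type \eqref{eq:Weakbpalpha} is then obtained by summing the single-scale estimates over $j$: testing \eqref{SuffCond} against $F=\{A_jf>\lambda\}$ and using Chebyshev turns the restricted bound into a single-scale weak estimate for $A_j$ on $L^p(w)$, and one builds the level set of $\sup_j A_jf$ by distributing $\lambda$ among the scales and summing, exactly as in the discrete scheme of Naor--Tao and Ombrosi--Rela; here the geometric factor $c_j$ is what makes the resulting series converge, and since no room beyond the decay $\beta<1$ is consumed, the weak type survives at the endpoint $\beta=\alpha$.

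The heart of the matter, and what I expect to be the main obstacle, is upgrading these \emph{restricted} single-scale estimates to \emph{genuine} bounds $\|A_jf\|_{L^p(w)}\lesssim e^{-\delta j}\|f\|_{L^p(w)}$ with a decay rate $\delta>0$ that lands exactly at the exponent $p$ and still survives summation. The clean route is to interpolate the restricted strong type $(q_0,q_0)$ with the trivial bound $\|A_j\|_{L^\infty\to L^\infty}\le1$; this produces a strong $(p,p)$ inequality whose constant is a fixed power of $c_j$, hence still of the form $e^{-\delta j}$. The regime where this interpolation does not directly reach $p$ (when $q_0=p/\alpha\ge p$, i.e. $\alpha\le1$) is the delicate case and must be handled by a stopping-time/Calder\'on--Zygmund decomposition of the far averages adapted to $w$, fed by the testing condition \eqref{SuffCond}; it is precisely there that the strict gap $\beta<\alpha$ is used, since the loss incurred in passing from the restricted to the full estimate must be absorbed by the exponential decay, leaving $\delta>0$. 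Granting the single-scale bound, \eqref{sumAj} follows at once from $\sum_{j\ge1}j^{\gamma}e^{-\delta j}<\infty$ for every $\gamma\ge0$, and the strong type $\|Mf\|_{L^p(w)}\lesssim\|f\|_{L^p(w)}$ follows from $M^{far}f\le\sum_j A_jf$ and the triangle inequality, together with the local bound from the first paragraph.

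Finally, the dual estimate $\|Mf\|_{L^{p'}(\sigma)}\lesssim\|f\|_{L^{p'}(\sigma)}$ with $\sigma=w^{1-p'}$ is essentially free. Because the volume $\mu_n(B_H(x,j))$ does not depend on the center $x$ (homogeneity of $\H^n$), the kernel of $A_j$ is symmetric, so $A_j$ is self-adjoint on $L^2(\mu_n)$; the standard weighted duality for a $\mu_n$-self-adjoint linear operator then gives $\|A_j\|_{L^{p'}(\sigma)\to L^{p'}(\sigma)}=\|A_j\|_{L^p(w)\to L^p(w)}\lesssim e^{-\delta j}$. Summing these over $j$ yields $M^{far}$ bounded on $L^{p'}(\sigma)$, while for the local part one uses that $w\in A_{p,loc}(\H^n)$ is equivalent to $\sigma\in A_{p',loc}(\H^n)$, so the first paragraph applies verbatim to $\sigma$ and $p'$. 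Combining the local and far estimates gives the second displayed bound and completes the proof.
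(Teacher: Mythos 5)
You take the same overall frame as the paper (the $M^{loc}/M^{far}$ splitting, $A_{p,loc}$ plus standard Euclidean arguments for the local part, discretization of the radii, and the $\mu_n$-self-adjointness duality for the $L^{p'}(\sigma)$ bound), but the engine you propose for the far part is different, and it has a genuine gap. For \eqref{sumAj} you want single-scale bounds $\|A_jf\|_{L^p(w)}\lesssim e^{-\delta j}\|f\|_{L^p(w)}$ obtained by interpolating the restricted-type $(q_0,q_0)$ estimate, $q_0=p/\alpha$, coming from \eqref{SuffCond}, against the trivial $L^\infty$ bound. As you yourself concede, this reaches the exponent $p$ only when $q_0<p$, i.e.\ $\alpha>1$; in the regime $\beta<\alpha\le 1$, which the theorem allows and which is in fact the natural one (the weights produced by Proposition \ref{SuffP} have $\alpha=\frac{p}{p-\delta+1}<1$), you offer only a pointer to an unspecified ``stopping-time/Calder\'on--Zygmund decomposition''. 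That step is the whole difficulty and is never carried out. The paper needs no such case distinction: from \eqref{SuffCond}, arguing exactly as in the passage from Lemma \ref{Borders} to Lemma \ref{lem:sumLevels} (decompose $A_1f$ into the level sets $E_k=\{e^{(n-1)(k-1)}\le A_1f<e^{(n-1)k}\}$, apply \eqref{SuffCond} to each pair $(E_k,F_k)$, and optimize via the arithmetic Lemma \ref{opti}), one obtains the distributional inequality \eqref{redu}; integrating it against $p\lambda^{p-1}\,d\lambda$ yields
\[
\|A_{r}f\|_{L^{p}(w)}^{p}\lesssim \sum_{k=0}^{r}\left(e^{(n-1)(k-r)}\right)^{\frac{1-\beta}{2}\frac{p}{\alpha}}e^{(n-1)k\left(\beta\frac{p}{\alpha}-p\right)}\|A_{2}f\|_{L^{p}(w)}^{p}\lesssim e^{(n-1)rp\left(\frac{\beta}{\alpha}-1\right)}\|A_{2}f\|_{L^{p}(w)}^{p},
\]
valid for every $\alpha$, and \eqref{sumAj} follows by summing in $r$, using only $\beta<\alpha$ and $w\in A_{p,loc}(\H^n)$ to remove $A_2$. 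No interpolation and no case split on $\alpha$ are needed.

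There is also a flaw in your weak-type mechanism at the endpoint $\beta=\alpha$. You propose to convert \eqref{SuffCond} into ``a single-scale weak estimate for $A_j$ on $L^p(w)$'' and then sum over the radii $j$, distributing $\lambda$; for such a union bound to converge, the per-radius weak $(p,p)$ constants must decay in $j$, and at $\beta=\alpha$ they do not. Concretely, for $w=w_{1-p}$ (which satisfies the hypotheses with $\beta=\alpha$, by Example \ref{Exa} and Proposition \ref{SuffP}) and $f=\chi_{B(0,1)}$, one has $A_jf\gtrsim e^{-(n-1)j}$ on $B(0,j-1)$ while $w(B(0,j-1))\approx e^{(n-1)pj}$, so $w(\{A_jf>\lambda_j\})\gtrsim \lambda_j^{-p}\|f\|_{L^p(w)}^p$ at $\lambda_j\approx e^{-(n-1)j}$: the constants are bounded below uniformly in $j$, and $\sum_j w(\{A_jf>\lambda_j\})$ diverges no matter how the thresholds $\lambda_j\le\lambda$ are arranged. (Nor can interpolation rescue the endpoint: $\beta=\alpha$ forces $\alpha<1$, i.e.\ $q_0>p$.) The exponential gain of \eqref{SuffCond} is consumed exactly once, in passing from characteristic functions to general $f$; what makes the endpoint work in the paper is the order of operations in \eqref{redu}, where the decay is in $r-k$ rather than in $r$, so one sums over all $r\ge k$ first and only then applies the layer-cake argument in $k$. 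Thus both halves of your argument must be rerouted through the distributional inequality \eqref{redu}, which your proposal never establishes; once it is in place, the remaining parts of your outline (the local estimate, the $r^{\gamma}$-weighted summation, and the duality step for $\sigma=w^{1-p'}$) are correct and coincide with the paper's.
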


\begin{rem}\label{Sum_j}
 We observe that the estimate (\ref{sumAj}) in the previous theorem is stronger than the boundedness of the maximal function $M^{far}(f)$. In particular, it implies that if an operator $T$ satisfies the pointwise  estimate 
 $$|Tf(x)|\lesssim M^{loc}(|f|)(x)+\sum_{j\ge1} j^{\gamma} A_j(|f|)(x),$$ for some $\gamma\ge 0$, then the requested conditions on the weight $w$ in Theorem \ref{p>1} will be sufficient condition for the boundedness of $T$ in the space $L^p(w)$ with $p>1$. In particular, this generalized, in the hyperbolic setting, the unweighted estimates obtained by Clerc and Stein in \cite[Thm. 2]{CS} for the maximal function.
\end{rem}
\begin{rem}
It is not clear whether or not the condition (\ref{SuffCond}) for $\alpha=\beta$ is a necessary condition for the weak type $(p,p)$ boundedness of $M$ with respect to $w$. However, the condition is sharp in the following sense:  if $\beta=\alpha$ we can construct a weight for which the weak type $(p,p)$ holds, but the strong type $(p,p)$ fails. Consequently, the weak type $(q,q)$ fails as well for every $q<p$ (see Example \ref{Exa} (\ref{Ex2})). In particular, this shows that, unlike the classical case, in the hyperbolic context the weak $(p,p)$ inequality with respect to $w$ of the maximal operator is not equivalent to the strong estimate for $p>1$. 
\end{rem}
     
The condition (\ref{SuffCond}) could be not easy to be checked. For this reason, we consider the following result which provides a more tractable condition. To simplify the statement, given a positive integer $j$, let 
$$
\cC_{j}=B(0,j)\setminus B(0,j-1).
$$

Observe that the sets considered in the condition in \eqref{SuffCond} may have non-empty intersection with several different levels $\cC_j$. The condition in the following proposition studies the behavior of the weight at each level.

\begin{prop}\label{SuffP}
\label{cor:SuffAux}Let $1< p<\infty$, and let $w$ be a weight such that there exists a real number $\delta<1$,
so that for every $j,l,r\ge1$ integers with the restriction $|l-j|\leq r$, we have that
\begin{equation}
w(\cC_l\cap B(x,r))\lesssim e^{(n-1)\frac{r+l-j}{2}(p-\delta)}e^{(n-1)r\delta}w(x),\quad \mbox{for a.e.}\ x\in \cC_{j}.\label{Suffweaker}
\end{equation}
Then,  the condition \eqref{SuffCond} in Theorem \ref{p>1} holds with $\beta=\alpha=\frac{p}{p-\delta+1}$. 

\end{prop}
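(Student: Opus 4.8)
The plan is to estimate the bilinear quantity $I:=\int_F A_r(\chi_E)\,w\,d\mu_n$ directly from the hypothesis \eqref{Suffweaker}, organising everything according to the annular levels $\cC_j$. First I would use Tonelli's theorem together with the homogeneity of $\mu_n$ to write, with $V(r):=\mu_n(B(0,r))$,
\[
I=\frac{1}{V(r)}\int_E w\big(B(z,r)\cap F\big)\,d\mu_n(z)=\frac{1}{V(r)}\int_F \mu_n\big(E\cap B(y,r)\big)\,w(y)\,d\mu_n(y),
\]
the two expressions corresponding to the two Fubini orders. By \eqref{lanza la bola chico}, $V(r)\sim_n e^{(n-1)r}$ for $r\ge 1$. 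Decomposing $E=\bigcup_l E_l$ and $F=\bigcup_j F_j$ with $E_l=E\cap\cC_l$, $F_j=F\cap\cC_j$, and writing $I=\sum_{j,l}I_{j,l}$ for the corresponding pieces, only the pairs with $|j-l|\le r$ contribute, since a ball of radius $r$ changes the distance to the origin by at most $r$.

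The heart of the argument is to produce two competing bounds for each block $I_{j,l}$. Using the first Fubini order, for $z\in\cC_l$ I would bound $w(B(z,r)\cap F_j)\le w(B(z,r)\cap\cC_j)$ and apply \eqref{Suffweaker} (with centre level $l$ and measured level $j$), then integrate in $z$ to get
\[
I_{j,l}\lesssim \frac{1}{V(r)}\,e^{(n-1)r\delta}\,e^{(n-1)\frac{r+j-l}{2}(p-\delta)}\,w(E_l).
\]
Using instead the second Fubini order, for $y\in\cC_j$ I would bound $\mu_n(E_l\cap B(y,r))\le\mu_n(\cC_l\cap B(y,r))\le\mu_n(B(0,l)\cap B(y,r))$ and invoke Proposition \ref{intersection of balls} (with $d_n(0,y)\ge j-1$), then integrate $w$ over $F_j$ to obtain
\[
I_{j,l}\lesssim \frac{1}{V(r)}\,e^{(n-1)\frac{r+l-j}{2}}\,w(F_j).
\]
The first estimate is good when $j\le l$ and the second when $j\ge l$; crucially, the two exponents depend on the level difference $m:=j-l$ in \emph{opposite} ways.

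I would then interpolate these two bounds by taking their geometric mean with weights $\alpha/p$ and $1-\alpha/p$, choosing precisely $\alpha/p=\frac{1}{p-\delta+1}$. The decisive computation is that, for this exact weight, the coefficient of $m=j-l$ in the combined exponent is $\tfrac12\big[\tfrac{\alpha}{p}(p-\delta)-(1-\tfrac{\alpha}{p})\big]=0$, so all dependence on the level difference cancels; the remaining $r$-dependence collapses (after using $V(r)\sim_n e^{(n-1)r}$) to $e^{(n-1)r(\beta-1)}$ with $\beta=\frac{p}{p-\delta+1}$. This yields the clean block estimate
\[
I_{j,l}\lesssim e^{(n-1)r(\beta-1)}\,w(E_l)^{\alpha/p}\,w(F_j)^{1-\alpha/p}.
\]
Finally I would sum over the admissible pairs $|j-l|\le r$: Hölder's inequality with exponents $p/\alpha$ and $p/(p-\alpha)$, using that each level index is repeated at most $2r+1$ times in the band, reconstructs $w(E)^{\alpha/p}w(F)^{1-\alpha/p}$, which is the right-hand side of \eqref{SuffCond} with $\beta=\alpha=\frac{p}{p-\delta+1}$.

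The main obstacle is the exact balancing in the interpolation step: the whole proposition hinges on the level-difference exponents of the two geometric estimates being negatives of one another up to the factor $(p-\delta)$, so that a \emph{single} choice of $\alpha$ simultaneously cancels the $m$-dependence and produces the asserted value $\beta=\frac{p}{p-\delta+1}$; lining up the constants in \eqref{Suffweaker} and in Proposition \ref{intersection of balls} is where the essential work lies. A secondary technical point is the band summation, which introduces a factor polynomial in $r$; since $\beta<1$ this factor is dominated by the exponential gain $e^{(n-1)r(\beta-1)}$ and is harmless for the use of \eqref{SuffCond} in Theorem \ref{p>1}.
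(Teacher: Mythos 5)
The first half of your argument coincides with the paper's own proof: the paper derives exactly your two block bounds for $I_{j,l}$ (the two Fubini orders, \eqref{Suffweaker} giving the $w$-side bound and Proposition \ref{intersection of balls} the $\mu_n$-side bound), and your algebra showing that the weight $\alpha/p=\frac{1}{p-\delta+1}$ cancels the dependence on $j-l$ and produces the $r$-exponent $\beta=\frac{p}{p-\delta+1}$ is correct. The gap is in the summation. Replacing the minimum of the two bounds by their geometric mean erases all decay in the level difference: each of the roughly $2r+1$ diagonals of the band $|j-l|\le r$ then contributes the same amount, so your H\"older step unavoidably returns the multiplicity of the band, and what you actually prove is \eqref{SuffCond} with $r\,e^{(n-1)r(\beta-1)}$ in place of $e^{(n-1)r(\beta-1)}$. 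This is not an artifact of a careless estimate: if $w(E_\cdot)$ and $w(F_\cdot)$ are spread uniformly over $\sim r$ consecutive levels, the sum of your interpolated block bounds really is of order $r\,e^{(n-1)r(\beta-1)}w(E)^{\alpha/p}w(F)^{1-\alpha/p}$. The paper avoids this loss by \emph{not} interpolating: it keeps the minimum and sums it with the arithmetic Lemma \ref{opti} (applied with $\kappa=e^{n-1}$ and, in your indexing, $c_l=e^{-(n-1)(p-\delta)l}w(E_l)$, $d_j=e^{-(n-1)j}w(F_j)$); the double sum is split at an optimized crossover $\rho$, and on each side the selected branch of the minimum still decays geometrically, so both series converge with no polynomial factor. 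That min structure is precisely what your interpolation throws away.

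Moreover, your closing claim that the factor $r$ is harmless fails in the only case the proposition addresses, namely $\beta=\alpha$. To absorb $r\,e^{(n-1)r(\beta-1)}$ into a pure exponential you must enlarge $\beta$ to some $\beta'>\beta=\alpha$, and then the structural hypothesis $\beta\le\alpha$ of Theorem \ref{p>1} is violated. If instead you carry the factor along, recall that the passage from \eqref{SuffCond} to the distributional estimate \eqref{redu} raises the bilinear bound to the power $p/\alpha$ (one solves for $w(F_k)$ in an inequality of the form $w(F_k)\,t_k\lesssim D_r\,w(E_k)^{\alpha/p}w(F_k)^{1-\alpha/p}$, where $t_k$ is the pigeonhole threshold and $D_r$ the constant in \eqref{SuffCond}), so the loss becomes $r^{p/\alpha}$; summing the geometric tail over $r\ge k$ leaves a factor $(1+k)^{p/\alpha}$, and since at $\beta=\alpha$ the remaining weight is $e^{(n-1)pk}$, the layer-cake sum in $k$ yields an $L^p\log L$-type quantity rather than $\lambda^{-p}\|A_2f\|_{L^p(w)}^p$. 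Thus the endpoint weak $(p,p)$ estimate --- which is sharp here, by Example \ref{Exa}~(\ref{Ex2}) --- would not follow. A polynomial loss is indeed harmless in the regime $\beta<\alpha$ (for instance for the $L^q$, $q>p$, conclusions of Corollary \ref{CSU}), but not at $\beta=\alpha$. The fix is short: keep your two bounds as a minimum and invoke Lemma \ref{opti}, exactly as in the paper's proofs of Lemma \ref{Borders} and of this proposition.
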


Combining Theorem \ref{p>1}, Remark \ref{Sum_j} and Proposition \ref{SuffP} we obtain the following corollary. 

\begin{cor}\label{CSU}
\label{cor:SuffAux}Let $1\leq p<\infty$, and $w\in A_{p,loc}(\H^n)$ such that there exists a real number $\delta<1$
such that for every $j,l,r\ge1$ integers with the restriction $|l-j|\leq r$, we have that
\begin{equation*}
w(\cC_l\cap B(x,r))\lesssim e^{(n-1)\frac{r+l-j}{2}(p-\delta)}e^{(n-1)r\delta}w(x),\quad \mbox{for a.e.}\ x\in \cC_{j}.
\end{equation*}
Then
\begin{equation*}
\|Mf\|_{L^{p,\infty}(w)}\lesssim\|f\|_{L^{p}(w)}.
\end{equation*}
Furthermore, if $p<q$ we have 
\begin{equation*}
\|Tf\|_{L^{q}(w)}\lesssim\|f\|_{L^{q}(w)},
\end{equation*}
for every operator $T$ satisfying the  pointwise  estimate 
$$
|Tf(x)|\lesssim M^{loc}(|f|)(x)+j^{\gamma}\sum_{j\ge1} A_j(|f|)(x),
$$
for some $\gamma\ge 0$.
\end{cor}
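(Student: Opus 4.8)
The plan is to derive Corollary~\ref{CSU} purely as a bookkeeping consequence of the three preceding results, so the proof is an assembly rather than a fresh argument. First I would treat the case $p>1$, and then handle $p=1$ separately at the end. For $p>1$, the key observation is that the hypotheses of the corollary are exactly the hypotheses of Proposition~\ref{SuffP} together with the assumption $w\in A_{p,loc}(\H^n)$. So I would invoke Proposition~\ref{SuffP} to conclude that condition~\eqref{SuffCond} of Theorem~\ref{p>1} holds with $\beta=\alpha=\tfrac{p}{p-\delta+1}$. Since $\delta<1$ gives $p-\delta+1>p$, we get $0<\beta=\alpha<1$ and $\beta\le\alpha<p$, so the structural constraints $0<\beta<1$ and $\beta\le\alpha<p$ required in Theorem~\ref{p>1}(ii) are met.

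With condition~\eqref{SuffCond} and $w\in A_{p,loc}(\H^n)$ in hand, Theorem~\ref{p>1} applies directly. Its first conclusion~\eqref{eq:Weakbpalpha} yields the stated weak-type inequality $\|Mf\|_{L^{p,\infty}(w)}\lesssim\|f\|_{L^{p}(w)}$, which is the first displayed claim of the corollary. For the strong-type consequence, the subtle point is that Theorem~\ref{p>1} gives the summation estimate~\eqref{sumAj} only under the \emph{strict} inequality $\beta<\alpha$, whereas Proposition~\ref{SuffP} produces the borderline case $\beta=\alpha$. I would resolve this by exploiting the freedom in $\delta$: since the hypothesis holds for a fixed $\delta<1$, I would choose any $\delta'$ with $\delta<\delta'<1$, and observe that the pointwise bound~\eqref{Suffweaker} with exponent parameter $\delta$ implies a strictly weaker bound that still falls under Proposition~\ref{SuffP} with a different pair. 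Alternatively, and more cleanly, I would apply Proposition~\ref{SuffP} with a slightly perturbed parameter so that the resulting $\beta$ and $\alpha$ separate, which is possible because the estimate~\eqref{Suffweaker} is monotone in the relevant exponents; this produces $\beta<\alpha$ and hence~\eqref{sumAj} for every $\gamma\ge0$.

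Once~\eqref{sumAj} is available, the passage to boundedness of a general operator $T$ is exactly Remark~\ref{Sum_j}. Indeed, if $|Tf(x)|\lesssim M^{loc}(|f|)(x)+\sum_{j\ge1}j^{\gamma}A_j(|f|)(x)$, then taking $L^q(w)$ norms and applying the triangle inequality, the $M^{loc}$ term is controlled by the local $A_{q,loc}$ theory (valid since $w\in A_{p,loc}\subseteq A_{q,loc}$ for $q>p$ by the monotonicity of local $A_p$ classes) and the tail $\sum_j j^{\gamma}\|A_jf\|_{L^q(w)}$ is controlled by~\eqref{sumAj} applied at exponent $q$. Here I would note that the hypotheses of the corollary, being uniform in the level structure, survive the increase of the exponent from $p$ to $q>p$, so Theorem~\ref{p>1} and Proposition~\ref{SuffP} may be re-run at exponent $q$ to supply~\eqref{sumAj} there.

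The main obstacle I anticipate is the borderline-to-strict passage from $\beta=\alpha$ to $\beta<\alpha$: the corollary claims a strong-type bound, but the clean output of Proposition~\ref{SuffP} is exactly the critical case where only the weak type is guaranteed by Theorem~\ref{p>1}. The resolution hinges on the strict inequality $\delta<1$ leaving room to perturb parameters, and I would need to verify carefully that~\eqref{Suffweaker} indeed remains valid (or is implied) under the small perturbation, so that the gap $\alpha-\beta$ becomes positive. A secondary technical point is the $p=1$ endpoint, for which the strong-type statement reduces to the weak type and should be read off from the $L^{1,\infty}$ version of the theory; I would handle it by the same summation estimate applied to $M^{far}$ together with the weak $(1,1)$ behavior of $M^{loc}$.
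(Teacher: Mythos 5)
Your first paragraph is correct and is exactly the intended assembly: the corollary's hypotheses are those of Proposition~\ref{SuffP}, which yields \eqref{SuffCond} with $\beta=\alpha=\frac{p}{p-\delta+1}$; the constraints $0<\beta<1$ and $\beta\le\alpha<p$ hold because $\delta<1\le p$; and Theorem~\ref{p>1} then gives $\|Mf\|_{L^{p,\infty}(w)}\lesssim\|f\|_{L^p(w)}$.

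The gap is in your treatment of the strong-type part, and it is a genuine one. Both of your proposed mechanisms fail for the same reason: Proposition~\ref{SuffP} outputs $\beta=\alpha$ \emph{identically}, for every admissible $\delta$, so neither perturbing $\delta$ to some $\delta<\delta'<1$ (which is allowed, since \eqref{Suffweaker} is monotone in $\delta$ on the range $|l-j|\le r$) nor re-running the proposition at exponent $q$ can ever separate $\beta$ from $\alpha$: you merely trade the borderline pair $\beta=\alpha=\frac{p}{p-\delta+1}$ for another borderline pair such as $\frac{p}{p-\delta'+1}$ or $\frac{q}{q-\delta+1}$, and Theorem~\ref{p>1} then returns only the weak type again. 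Worse, your second paragraph claims the perturbation produces \eqref{sumAj} at the exponent $p$ itself; that would imply $\|Mf\|_{L^p(w)}\lesssim\|f\|_{L^p(w)}$, which is false in general: Example~\ref{Exa}~(\ref{Ex2}) exhibits the weight $w_{1-p}$ satisfying precisely the hypotheses of this corollary for which the strong $(p,p)$ inequality fails. So no correct argument can close the borderline gap at exponent $p$; the strict inequality $p<q$ must enter, and it enters through a rescaling of $\alpha$, not of $\delta$. Namely, the single inequality furnished by Proposition~\ref{SuffP},
\[
\int_F A_r(\chi_E)\,w\,d\mu_n \;\lesssim\; e^{(n-1)r(\beta-1)}\,w(E)^{\beta/p}\,w(F)^{1-\beta/p},
\qquad \beta=\tfrac{p}{p-\delta+1},
\]
can be re-read as hypothesis (ii) of Theorem~\ref{p>1} \emph{at exponent} $q$, because $w(E)^{\beta/p}=w(E)^{\alpha'/q}$ with $\alpha'=\frac{q}{p}\,\beta=\frac{q}{p-\delta+1}$. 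Since $p<q$ this gives $\beta<\alpha'$ strictly, while the structural constraints persist: $0<\beta<1$ and $\alpha'<q$ (the latter because $p-\delta+1>1$). Together with $w\in A_{p,loc}(\H^n)\subseteq A_{q,loc}(\H^n)$, Theorem~\ref{p>1} applied at exponent $q$ yields \eqref{sumAj} in $L^q(w)$, and Remark~\ref{Sum_j} transfers this to any $T$ dominated pointwise by $M^{loc}(|f|)+\sum_{j\ge1}j^{\gamma}A_j(|f|)$. This rescaling step is the actual content of the corollary and is absent from your proposal. (The $p=1$ endpoint you defer is, by contrast, only a minor issue: Lemma~\ref{opti} is stated for $1\le p$, and the weak-type part of the argument goes through verbatim.)
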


\subsection{Organization of the paper} This paper is organized as follow. In Section \ref{geo} we prove an estimate on the measure of the intersection of two hyperbolic balls. Section \ref{main} is devoted to the proof of the main results of this paper. The proof of Theorem \ref{F-S} is contained in Subsection \ref{proof F-S}, while the proof of Theorem \ref{p>1} is contained in Subsection \ref{proof of p>1}. The Section \ref{main} concludes with the proof of Proposition \ref{SuffP}.  The Section \ref{ejemplos varios}
contains examples that clarify several points previously mentioned. Finally, the paper concludes with an appendix on the ball model of the hyperbolic space.

\section{Geometric results}\label{geo}
\subsection{The hyperbolic space}

Although the precise realisation of hyperbolic space is not important for our purposes, for sake of concreteness, throughout this article we will consider the ball model. Recall that $\mu_n$ denotes the volume measure, and by $d_n$ we will denote the hyperbolic distance. A brief review of some basic facts about this model and its isometries is left to the Appendix \ref{ball model}.

\medskip

\subsection{Two results on the intersection of balls in the hyperbolic space}

This subsection is devoted to prove the following two geometric results, which will be very important in the sequel.

\begin{prop}\label{intersection of balls}
Let $\sub{B}{H}(y,s)$ and $\sub{B}{H}(x,r)$ be two balls in $\H_n$. Then
$$
\mu_n\Big( \sub{B}{H}(y,s)\cap\sub{B}{H}(x,r)\Big)\leq C_n e^{\,\frac{n-1}{2}(\,r+s-d_{n}(x,y)\,)},
$$
where $C_n$ is a constant that only depends on the dimension.
\end{prop}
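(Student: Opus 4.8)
The plan is to exploit the exponential volume growth recorded in \eqref{lanza la bola chico} by showing that the lens-shaped intersection is contained in a \emph{single} hyperbolic ball whose radius is, up to an additive dimensional constant, $R_0:=\tfrac12\big(r+s-d_n(x,y)\big)$; the target exponent $\tfrac{n-1}{2}(r+s-d_n(x,y))$ is exactly $(n-1)R_0$. First I would dispose of the degenerate configurations. Writing $d=d_n(x,y)$, if $d\ge r+s$ the two balls are disjoint by the triangle inequality and the left-hand side vanishes, so there is nothing to prove; and if $|r-s|\ge d$ one ball is contained in the other, the intersection is the smaller ball, and the estimate is immediate from \eqref{lanza la bola chico}. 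Thus I may assume $|r-s|<d<r+s$.

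Fix the geodesic $\gamma$ through $x$ and $y$, parametrised by arc length with $x=\gamma(0)$ and $y=\gamma(d)$, and let $c=\gamma(t^{\ast})$ with $t^{\ast}=\tfrac12(d+r-s)$; under the standing assumption $t^{\ast}\in(0,d)$, so $c$ is an interior point of the chord and plays the role of the centre of the lens. For a point $z$, let $\gamma(t)$ be the foot of the perpendicular from $z$ to $\gamma$ and let $\rho=d_n\big(z,\gamma(t)\big)$ be the distance from $z$ to $\gamma$. The hyperbolic Pythagorean theorem gives the exact identities
\[
\cosh d_n(z,x)=\cosh(t)\cosh(\rho),\qquad \cosh d_n(z,y)=\cosh(t-d)\cosh(\rho),
\]
and likewise $\cosh d_n(z,c)=\cosh(t-t^{\ast})\cosh(\rho)$.

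The heart of the argument is the claim that every $z$ in the intersection satisfies $|t-t^{\ast}|+\rho\le R_0+C$ for an absolute constant $C$. I would prove this using the elementary two-sided bounds $\tfrac12 e^{|a|}\le\cosh a\le e^{|a|}$: inserting them into the membership inequalities $\cosh(t)\cosh(\rho)\le\cosh r$ and $\cosh(t-d)\cosh(\rho)\le\cosh s$ yields $|t|+\rho\le r+\log 4$ and $|t-d|+\rho\le s+\log 4$. Substituting $t^{\ast}=\tfrac12(d+r-s)$ into whichever of these is appropriate for the sign of $t-t^{\ast}$ then produces the claim; geometrically, the displacement along $\gamma$ and the transverse displacement cannot both be large. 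Feeding the claim into $\cosh d_n(z,c)=\cosh(t-t^{\ast})\cosh(\rho)\le e^{|t-t^{\ast}|+\rho}$ gives $d_n(z,c)\le R_0+C'$, i.e.
\[
\sub{B}{H}(y,s)\cap\sub{B}{H}(x,r)\subseteq \sub{B}{H}\!\big(c,\,R_0+C'\big).
\]
By monotonicity of $\mu_n$ and the bound $\mu_n\big(\sub{B}{H}(c,R)\big)\le C_n\,e^{(n-1)R}$ coming from \eqref{lanza la bola chico}, the desired inequality follows with exponent $(n-1)R_0=\tfrac{n-1}{2}(r+s-d_n(x,y))$.

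I expect the main obstacle to be making the containment uniform: one must justify the Pythagorean identities in the chosen perpendicular-foot (Fermi-type) coordinates and, more delicately, control the additive constants so that they depend only on the dimension, while separately checking the boundary configurations in which the foot $\gamma(t)$ falls outside $[0,d]$ or $t^{\ast}$ is near an endpoint. The estimate is essentially sharp in the internally tangent case $d=|r-s|$, where the intersection is the smaller ball and both sides match, which is a useful consistency check on the constants.
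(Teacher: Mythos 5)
Your proof is correct, and it is worth noting where it coincides with and departs from the paper's argument. At the top level the strategy is the same: both proofs place the lens $\sub{B}{H}(y,s)\cap\sub{B}{H}(x,r)$ inside a \emph{single} hyperbolic ball of radius $\tfrac12\big(r+s-d_n(x,y)\big)+O(1)$ centered at the same point of the axis (your $c=\gamma(t^{\ast})$ is exactly the paper's point $m$, which satisfies $d_n(m,x)=\tfrac12(r+d-s)$), and then invoke the volume growth \eqref{lanza la bola chico}. The genuine difference is in how the containment is established. The paper exploits the rotational symmetry of the lens about the axis: it identifies the rim $S=\partial\sub{B}{H}(y,s)\cap\partial\sub{B}{H}(x,r)$, bounds the distance $\rho$ from $m$ to $S$ via the hyperbolic law of cosines (using that one of the two angles at $m$ is obtuse), and then needs a separate comparison argument on the boundary of $\sub{B}{H}(m,\rho)$ to conclude that the lens lies inside that ball. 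You instead work in Fermi coordinates along the axis and use the Pythagorean identity $\cosh d_n(z,\gamma(u))=\cosh(t-u)\cosh\rho$ together with the crude bounds $\tfrac12 e^{|a|}\le \cosh a\le e^{|a|}$ to estimate $d_n(z,c)$ directly for \emph{every} point $z$ of the lens; the case analysis on the sign of $t-t^{\ast}$ does close correctly, because in the main case $|r-s|<d<r+s$ one has $0<t^{\ast}<d$, so $t\ge t^{\ast}$ forces $t\ge 0$ (use $|t|+\rho\le r+\log 4$) and $t<t^{\ast}$ forces $t<d$ (use $|t-d|+\rho\le s+\log 4$), each giving $|t-t^{\ast}|+\rho\le \tfrac12(r+s-d)+\log 4$. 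This route is more elementary — no symmetry group, no law of cosines, no boundary/connectedness argument — and it treats all radii uniformly, whereas the paper first reduces to $r,s>2$; your explicit handling of the degenerate cases ($d\ge r+s$, and $d\le|r-s|$ via $\min(r,s)\le\tfrac12(r+s-d)$) is also correct. What your version gives up is geometric precision: the paper's argument yields the exact containment in $\sub{B}{H}(m,\rho)$ with $\rho\le\tfrac12(r+s-d)+\log 2$, which is what feeds the two-sided inclusion of Corollary \ref{inclusion de bolas}; your bound carries a slightly larger additive constant, and to recover that corollary you would have to add the (easy, triangle-inequality) lower inclusion separately.
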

\bdem
We can assume that $\sub{B}{H}(y,s)\cap\sub{B}{H}(x,r)\neq\varnothing$. On the other hand, since the estimate is trivial if $r$ and $s$ are less than a fixed constant, we can also assume that $r,s>2$. Without loss of generality, we can assume that $y=0$ and $x=(d,0,\ldots,0)$ with $d=d_n(x,y)$. Note that we can also assume that $d>0$, otherwise the estimate is trivial. The geodesic passing through the centers is the segment
$$
L=\{(t,0,\ldots,0):\ t\in (-1,1)\}.
$$
Since the balls are symmetric with respect to this geodesic line, the intersection is also symmetric with respect to this line. Let $O_L(n-1)$ be the subgroup of  the orthogonal group $O(n)$ defined by
$$
O_L(n)=\{A\in O(n): \mbox{A leaves invariant the geodesic line $L$}\},
$$
then the intersection is invariant by the action of $O_L(n-1)$. Moreover, the subgroup $O_L(n-1)$ acts transitively in the intersection of the boundaries $\partial\sub{B}{H}(0,s)\cap\partial\sub{B}{H}(x,r)$, which turns out to be an $(n-2)$-sphere. Let $S$ denote this intersection of boundaries, and consider the point $m\in L$ that satisfies
$$
d_n(0,m)=\frac{s+d-r}{2} \quad \Longleftrightarrow \quad d_n(m,x)=\frac{r+d-s}{2}.
$$
Since $L$ is a symmetry axis for $S$, the points in $S$ are at the same distance to the  point $m$. 
Let $\rho$ denote this distance. The volume of the ball of radius $\rho$ can be estimated using the hyperbolic law of cosines. Take $q\in S$, and consider the two dimensional hyperbolic (also linear) plane $P$ containing $q$ and $L$.  Let us restrict our attention to this hyperbolic plane (see Figure \ref{inter}).
\begin{figure}[H]
           \centering
           \includegraphics[height=4.5cm]{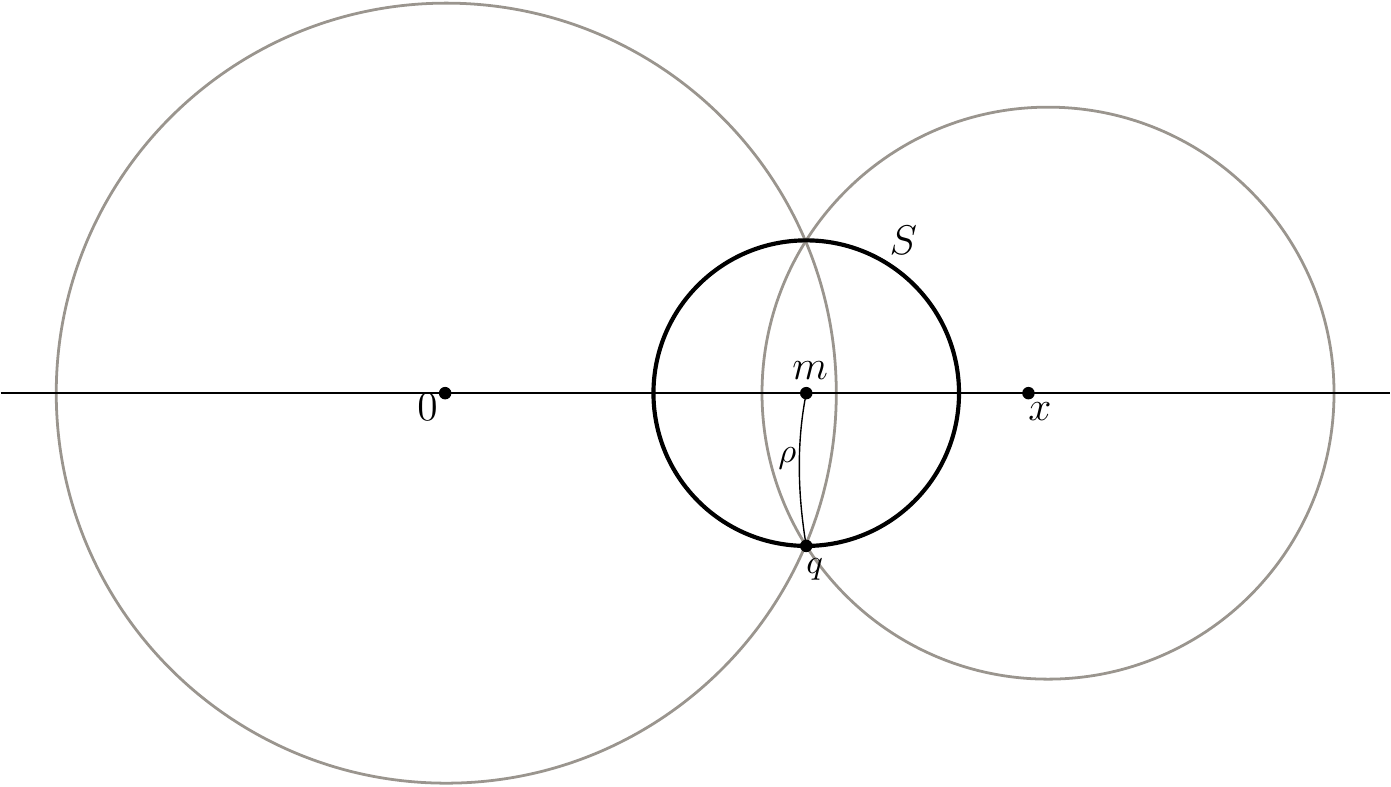}
           \par\vspace{0cm}
           \caption{Intersection of the balls with the two dimensional plane $P$.}
           \label{inter}
\end{figure} 
Since  $\angle(0,m,q)+\angle(q,m,x)=\pi$, one of them is greater or equal to $\frac{\pi}{2}$. Suppose that the angle $\theta=\angle(0,m,q)$ is greater than $\frac{\pi}{2}$, and consider the geodesic triangle whose vertices are $0$, $m$ and $q$ (see Figure \ref{triangulo})\footnote{If the angle $(0,m,q)$ were smaller than $\frac{\pi}{2}$, we use the angle $(q,m,x)$ and the triangle with vertices $q$, $m$ and $x$. }. 
\begin{figure}[H]
           \centering
           \includegraphics[height=3cm]{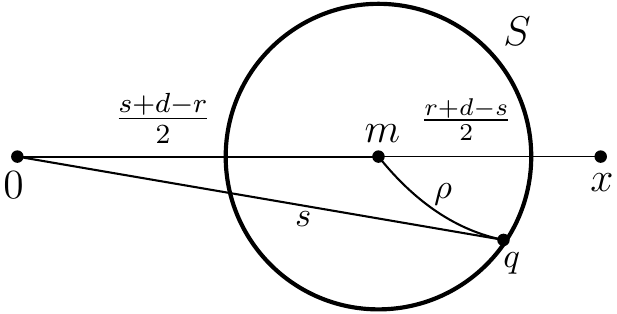}
           \par\vspace{0cm}
           \caption{Geodesic triangle.}
           \label{triangulo}
\end{figure} 
Since $\cos(\theta)$ is non-positive, we have that
\begin{align*}
\cosh(s)&= \cosh\Big(\frac{s+d-r}{2}\Big)\cosh (\rho)-\sinh\Big(\frac{s+d-r}{2}\Big)\sinh (\rho) \cos(\theta) \\
&\geq \cosh\Big(\frac{s+d-r}{2}\Big)\cosh (\rho).
\end{align*}

\medskip

Therefore, we get the following estimate
$$
e^\rho\leq \cosh(\rho) \leq \frac{\cosh(s)}{\cosh\Big(\frac{s+d-r}{2}\Big)}\leq 2 e^{\frac{s+r-d}{2}}.
$$
By equation \eqref{lanza la bola chico}, we get that
\begin{equation}\label{bola rho}
\Vol \Big(\sub{B}{H}(m,\rho)\Big)= \Omega_n \int_0^\rho (\sinh t)^{n-1} dr \leq K_n e^{(n-1)\rho} \leq 2^nK_n e^{(n-1)\left(\frac{s+r-d}{2}\right)}.
\end{equation}

\medskip

Now, it is enough to prove that $\sub{B}{H}(0,s)\cap\sub{B}{H}(x,r)\subseteq \sub{B}{H}(m,\rho)$. Since the intersection is an open-connected set, it is enough to prove that the boundary $\sub{B}{H}(m,\rho)$ is not contained in the intersection. So, take $p\in \partial\sub{B}{H}(m,\rho)$. By a continuity argument, we can assume that $p\notin L$. Then, as before, consider the plane $P$ generated by $p$ and the geodesic $L$. The geodesic $L$ divide this plane in two parts. Let $q$ be the unique point in $P\cap S$ in the same half-plane as $p$, and suppose that $\theta_p=\angle(p,m,x)$ is greater or equal than $\theta_q=\angle(q,m,x)$  (see Figure \ref{triangles}).
\begin{figure}[H]
           \centering
           \includegraphics[height=3.5cm]{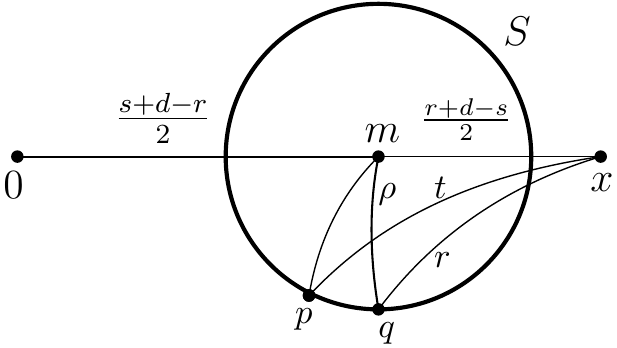}
           \par\vspace{0cm}
           \caption{Comparison of triangles.}
           \label{triangles}
\end{figure} 

If $t=d_n(x,p)$, since the cosine is decreasing in $(0,\pi)$ we get that
\begin{align*}
\cosh(t)&= \cosh\Big(\frac{r+d-s}{2}\Big)\cosh (\rho)-\sinh\Big(\frac{r+d-s}{2}\Big)\sinh (\rho) \cos(\theta_p) \\
&\geq  \cosh\Big(\frac{r+d-s}{2}\Big)\cosh (\rho)-\sinh\Big(\frac{r+d-s}{2}\Big)\sinh (\rho) \cos(\theta_q) \\
&=\cosh(r).
\end{align*}
In consequence, $t\geq r$ and therefore, the point $t\notin \sub{B}{H}(x,r)$. If $\angle(p,m,x)$ is smaller than $\angle(q,m,x)$, it holds that $\angle(p,m,0)$ is greater than $\angle(q,m,0)$. Hence, the same argument, replacing the vertex $x$ by the vertex $0$ shows that $t\notin  \sub{B}{H}(0,s)$. This concludes the proof.
\edem

\medskip

The following is a corollary of the proof of the previous lemma.

\begin{cor}\label{inclusion de bolas}
Let $\sub{B}{H}(0,s)$ and $\sub{B}{H}(x,r)$ be two balls in $\H_n$ such that their intersection has positive measure. If $\rho_0=\frac{1}{2}(\,r+s-d_{n}(0,x)\,)$, then
$$
\sub{B}{H}(m,\rho_0)\subseteq \sub{B}{H}(0,s)\cap\sub{B}{H}(x,r)\subseteq \sub{B}{H}(m,\rho_{0}+1),
$$
where $m=\alpha x$, and $\ds \alpha=\tanh\Big(\frac{s+d-r}{2}\Big)$.
\end{cor}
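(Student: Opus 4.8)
The plan is to establish the two inclusions separately, reusing the configuration and notation from the proof of Proposition \ref{intersection of balls}. As there, I would take $y=0$, put $x$ on the first axis with $d=d_n(0,x)$, let $L$ be the geodesic through $0$ and $x$, and let $m\in L$ be the point with $d_n(0,m)=\frac{s+d-r}{2}$ and $d_n(m,x)=\frac{r+d-s}{2}$. I work in the lens regime $|r-s|\le d\le r+s$, which is exactly the one in which $m$ lies between the two centres with these (positive) distances; when $|r-s|>d$ one ball contains the other and $m$ degenerates, so that case must be excluded or treated apart. The arithmetic facts that drive everything are $d_n(0,m)+\rho_0=s$ and $d_n(m,x)+\rho_0=r$, which make $m$ the midpoint of the chord $L\cap\big(\sub{B}{H}(0,s)\cap\sub{B}{H}(x,r)\big)$, at distance exactly $\rho_0$ from each of its two tips.

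For the inner inclusion $\sub{B}{H}(m,\rho_0)\subseteq \sub{B}{H}(0,s)\cap\sub{B}{H}(x,r)$ — the genuinely new content — I would fix any $p$ with $t:=d_n(m,p)\le\rho_0$ and set $\psi=\angle(0,m,p)$. Since $m\to 0$ and $m\to x$ are opposite rays of $L$, in the Euclidean tangent space $T_m\H^n$ we have $\angle(x,m,p)=\pi-\psi$ for \emph{every} $p$, irrespective of the dimension $n$. The hyperbolic law of cosines in the geodesic triangles $\{0,m,p\}$ and $\{x,m,p\}$ then gives
\begin{align*}
\cosh d_n(0,p)&=\cosh d_n(0,m)\cosh t-\sinh d_n(0,m)\sinh t\,\cos\psi,\\
\cosh d_n(x,p)&=\cosh d_n(m,x)\cosh t+\sinh d_n(m,x)\sinh t\,\cos\psi.
\end{align*}
Because $|\cos\psi|\le 1$, each right-hand side is bounded by its value at the aligning angle, namely $\cosh\big(d_n(0,m)+t\big)\le\cosh s$ and $\cosh\big(d_n(m,x)+t\big)\le\cosh r$, using $t\le\rho_0$ and the two identities above. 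Hence $d_n(0,p)\le s$ and $d_n(x,p)\le r$ hold \emph{simultaneously} for every direction $\psi$, strictly when $t<\rho_0$, which yields the inclusion.

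For the outer inclusion $\sub{B}{H}(0,s)\cap\sub{B}{H}(x,r)\subseteq \sub{B}{H}(m,\rho_0+1)$ I would invoke the proof of Proposition \ref{intersection of balls}, which already places the intersection inside $\sub{B}{H}(m,\rho)$, where $\rho=d_n(m,S)$ is the distance from $m$ to the waist sphere $S=\partial\sub{B}{H}(0,s)\cap\partial\sub{B}{H}(x,r)$. It then remains to sharpen the constant: writing $a=\frac{s+d-r}{2}$ so that $s=a+\rho_0$, the law-of-cosines bound there reads $\cosh\rho\le \frac{\cosh s}{\cosh a}=\cosh\rho_0+\tanh a\,\sinh\rho_0\le e^{\rho_0}$, since $\tanh a<1$. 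Thus $\tfrac12 e^{\rho}\le\cosh\rho\le e^{\rho_0}$ gives $\rho\le\rho_0+\ln 2<\rho_0+1$, and the inclusion follows. Finally, the description $m=\alpha x$ with $\alpha=\tanh\!\big(\frac{s+d-r}{2}\big)$ is a coordinate computation in the ball model: geodesics through the origin are Euclidean rays, so $m$ is a nonnegative multiple of $x$, and the explicit factor is read off from the distance formula recalled in Appendix \ref{ball model}, applied to $d_n(0,m)=\frac{s+d-r}{2}$.

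I expect two points to require care. The first is the inner inclusion: one must check that the two ball-constraints hold for the \emph{same} point $p$ in \emph{every} direction at once, and this is precisely where the opposite-ray relation $\angle(x,m,p)=\pi-\psi$ (valid in all dimensions because $T_m\H^n$ is Euclidean) does the work, letting each constraint be dominated by its own aligned extremal case. The second is keeping the outer constant below $1$: the loose estimate $\cosh\rho\le 2e^{\rho_0}$ would only give $\rho\le\rho_0+\ln 4>\rho_0+1$, so the improvement $\cosh\rho\le e^{\rho_0}$ coming from $\tanh a<1$ is exactly what is needed to land inside $\sub{B}{H}(m,\rho_0+1)$.
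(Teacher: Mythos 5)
Your proposal is correct, and it is worth noting that it supplies strictly more than the paper does: the paper offers no proof of this corollary at all, only the one-line remark that it ``is a corollary of the proof'' of Proposition \ref{intersection of balls}, so both substantive points of your argument are content the paper leaves implicit. For the outer inclusion you reuse the proposition's proof (the intersection lies in $\sub{B}{H}(m,\rho)$, where $\rho$ is the distance from $m$ to $S=\partial\sub{B}{H}(0,s)\cap\partial\sub{B}{H}(x,r)$), and your sharpening of the constant is genuinely needed, not cosmetic: the estimate as printed in the proposition only gives $\cosh\rho\le 2e^{\rho_0}$, hence $\rho\le\rho_0+\ln 4>\rho_0+1$, whereas your identity $\cosh s/\cosh a=\cosh\rho_0+\tanh a\,\sinh\rho_0\le e^{\rho_0}$ (with $a=\frac{s+d-r}{2}$) yields $\rho\le\rho_0+\ln 2<\rho_0+1$. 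The inner inclusion is likewise never addressed in the proposition's proof, and your two law-of-cosines computations at $m$, with supplementary angles $\psi$ and $\pi-\psi$ because $0$ and $x$ lie on opposite geodesic rays from $m$, settle it correctly in every dimension; the key point, that each of the two constraints is dominated by its aligned extremal case $\cosh(d_n(0,m)+t)\le\cosh s$ and $\cosh(d_n(m,x)+t)\le\cosh r$ simultaneously, is exactly right. Your caveat about the regime $d<|r-s|$ is also a real issue with the statement rather than pedantry: for concentric balls with $r\ne s$ one has $m=0$ and $\rho_0=\frac{r+s}{2}>\min(r,s)$, so the inner inclusion fails even though the intersection has positive measure; the corollary must therefore be read, as you do, in the lens regime $|r-s|\le d<r+s$ where $m$ lies between the centres. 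One small remark: the formula $m=\alpha x$ with $\alpha=\tanh\bigl(\frac{s+d-r}{2}\bigr)$ only matches the ball-model distance formula if $x$ is read as the unit direction (the paper itself commits the same abuse by writing $x=(d,0,\ldots,0)$), and your interpretation via $d_n(0,m)=\frac{s+d-r}{2}$ is the correct one.
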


\section{Proof of Main results}\label{main}

First of all, we will prove the following arithmetical lemma, which is a slight generalization of a result  contained in \cite{OR}.

\begin{lem}\label{opti}

Let $1\le p<\infty$, $-p<\delta<1$,and  $\kappa>1$. Let the sequences  of non-negative real numbers
 $\{c_{j}\}_{j=0}^{\infty}$  and $\{d_{l}\}_{l=0}^{\infty}$ satisfying  
 
\begin{equation*}
\sum_{j=0}^{\infty}\kappa^{(p-\delta)j}c_{j}=A \quad\mathrm{and}\quad\sum_{l=0}^{\infty}\kappa^{l}d_{l}=B.\label{eq:redfcjdj}
\end{equation*}
 Then, for every integer $r\ge 1$ we have that

\begin{equation}
\sum_{\substack{j,l\in\N\cup\{0\}}}\min\left\{ \kappa^{\delta r}\kappa^{\frac{(l+j+r)(p-\delta)}{2}}c_{j},\kappa^{\frac{l+j+r}{2}}d_{l}\right\} \leq c_{p,\delta, \kappa}\, \kappa^{\frac{p}{p-\delta+1}r}  A^{\frac{1}{p-\delta+1}} B^{1-\frac{1}{p-\delta+1}}.
\end{equation}

\end{lem}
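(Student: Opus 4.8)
The plan is to exploit the minimum–structure by reducing the double sum to a one–dimensional geometric sum taken along the diagonals $l-j=\mathrm{const}$, rather than attempting a single pointwise interpolation.

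First I would absorb the two constraints by setting $a_j=\kappa^{(p-\delta)j}c_j$ and $b_l=\kappa^{l}d_l$, so that $\sum_j a_j=A$ and $\sum_l b_l=B$ with all terms nonnegative. Substituting $c_j=\kappa^{-(p-\delta)j}a_j$ and $d_l=\kappa^{-l}b_l$ into the two quantities inside the minimum and simplifying the exponents, the first quantity becomes $\kappa^{\frac{p+\delta}{2}r}\,\kappa^{\frac{p-\delta}{2}(l-j)}a_j$ and the second becomes $\kappa^{\frac{r}{2}}\,\kappa^{-\frac12(l-j)}b_l$. The key structural observation is that, once the factors $a_j$ and $b_l$ are pulled out, both coefficients depend on $(j,l)$ \emph{only} through $k:=l-j$ and through the fixed parameter $r$.

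Next I would reindex by $(j,k)$ with $l=j+k$, writing $C_k:=\kappa^{\frac{p+\delta}{2}r}\kappa^{\frac{p-\delta}{2}k}$ and $D_k:=\kappa^{\frac{r}{2}}\kappa^{-\frac k2}$, which are constant in $j$. The heart of the argument is the elementary decoupling
\[
\sum_{j}\min\{C_k a_j,\,D_k b_{j+k}\}\le \min\Big\{C_k\sum_j a_j,\; D_k\sum_j b_{j+k}\Big\}\le \min\{C_k A,\;D_k B\},
\]
valid because $\sum\min\{u,v\}\le\min\{\sum u,\sum v\}$ for nonnegative terms. Summing over $k$ collapses the genuine two–dimensional sum to the single–variable sum $\sum_{k\in\Z}\min\{C_k A,\,D_k B\}$. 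This is a minimum of an increasing and a decreasing geometric sequence in $k$; splitting at the balance index where the two agree and summing the two geometric tails (convergent since $\tfrac{p-\delta}{2}>0$ and $\tfrac12>0$) gives a bound $c_{p,\delta,\kappa}\,\alpha^{\theta}\beta^{1-\theta}$ with $\alpha=\kappa^{\frac{p+\delta}{2}r}A$, $\beta=\kappa^{\frac r2}B$ and crossover exponent $\theta=\frac{1/2}{(p-\delta)/2+1/2}=\frac{1}{p-\delta+1}$. Finally, checking that $\frac{p+\delta}{2}\theta+\frac12(1-\theta)=\frac{p}{p-\delta+1}$ yields exactly $c_{p,\delta,\kappa}\,\kappa^{\frac{p}{p-\delta+1}r}A^{\frac{1}{p-\delta+1}}B^{1-\frac{1}{p-\delta+1}}$, as claimed.

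The main obstacle is understanding \emph{why} the naive route is wrong: the pointwise bound $\min\{X,Y\}\le X^{\theta}Y^{1-\theta}$ with a single $\theta$, followed by H\"older, fails because for every fixed $\theta$ one of the two resulting geometric series (in $j$ or in $l$) diverges, and the critical value $\theta=\frac{1}{p-\delta+1}$ is precisely the borderline at which $\sum_j a_j^{\theta}$ ceases to be controlled by $A^{\theta}$. The real insight is that one must instead decouple \emph{along the diagonals} $k=l-j$: there the minimum of the two \emph{full} sums recovers the concentration that the termwise H\"older estimate destroys, which is exactly the mechanism that makes the borderline exponent attainable.
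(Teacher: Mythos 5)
Your proof is correct, and it is essentially the paper's own argument in a different organization: the paper bounds the minimum by its first argument on the diagonal half-plane $l<j+\rho$ and by its second argument on $l\ge j+\rho$, sums the resulting geometric series, and then optimizes over $\rho$ --- the optimal $\rho$ being precisely your crossover index $k_0$, so your reindexing $k=l-j$, the decoupling $\sum_j\min\le\min\left\{\sum_j,\sum_j\right\}$, and the crossover summation reproduce the same computation after an equivalent change in the order of summation. Both arguments rest on the identical mechanism: the coefficients depend on $(j,l)$ only through $l-j$, one argument of the minimum is used on each side of an optimally chosen diagonal, and geometric decay away from the balance point produces the exponents $\theta=\frac{1}{p-\delta+1}$ and $\kappa^{\frac{p}{p-\delta+1}r}$.
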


\begin{proof}
To prove this inequality, let $\rho$ be a real parameter to be
chosen later, and argue as follows 
\begin{eqnarray*}
 &  &\sum_{\substack{j,l \in\mathbb{N}\cup\{0\}
}
}\min\left\{ \kappa^{\delta r}\kappa^{\frac{(l+j+r)(p-\delta)}{2}}c_{j},\kappa^{\frac{l+j+r}{2}}d_{l}\right\} \\
 &  & \leq \kappa^{\frac{p+\delta}{2}r}\sum_{\substack{l,j\in\N\cup\{0\}\\
l<j+\rho
}
}\kappa^{\frac{(l+j)(p-\delta)}{2}}c_{j}+\kappa^{\frac{r}{2}}\sum_{\substack{l,j\in\N\cup\{0\}\\
l\geq j+\rho
}
}k^{\frac{l+j}{2}}d_{l}\\
 &  & \lesssim \kappa^{\frac{p+\delta}{2}r}\sum_{j=0}^{\infty}\kappa^{\frac{(j+\rho+j)(p-\delta)}{2}}c_{j}+\kappa^{\frac{r}{2}}\sum_{l=0}^{\infty}\kappa^{l-\frac{\rho}{2}}d_{l}\\
 &  & =\kappa^{\frac{p+\delta}{2}r}\kappa^{\frac{\rho(p-\delta)}{2}}\sum_{j=0}^{\infty}k^{j(p-\delta)}c_{j}+\kappa^{\frac{r}{2}}k^{-\frac{\rho}{2}}\sum_{l=0}^{\infty}\kappa^{l}d_{l}\\
 &  & =\kappa^{\frac{p+\delta}{2}r}\kappa^{\frac{\rho(p-\delta)}{2}}A+\kappa^{\frac{r}{2}}\kappa^{-\frac{\rho}{2}}B.
\end{eqnarray*}
Choosing
$\rho=\frac{2\log_{\kappa}\left(\frac{B}{A}\right)}{p-\delta+1}-\frac{(p+\delta-1)r}{p-\delta+1}$, it  follows that
\[\kappa^{\frac{p+\delta}{2}r}\kappa^{\frac{\rho(p-\delta)}{2}}A+\kappa^{\frac{r}{2}}\kappa^{-\frac{\rho}{2}}B\leq c_{p,\delta}\kappa^{\frac{p}{p-\delta+1}r}A^{\frac{1}{p-\delta+1}} B^{1-\frac{1}{p-\delta+1}}, \] 
which concludes the proof.
\end{proof}

\subsection{Proof of Theorem \ref{F-S}}\label{proof F-S}

The first step consists on proving that Lemma \ref{intersection of balls} leads to the following result. This is a key point to push the scheme on the discrete cases in \cite{NT} or \cite{ORS}. Recall that, given $r\geq0$, we denote by $A_{r}$ the averaging operator
\[
A_{r}f(x)=\frac{1}{\mu_n(B_H(x,r))}\int_{y\in B_H(x,r)}|f(x)|\,d\mu_n(x).
\]
\begin{lem}
\label{Borders}Let $E,F$  measurable sets of $\H^n$, $s>1$ and
let $r$ be a positive integer. Then

\[
\int_{F}A_r(\chi_E)(y) w(y) d\mu_n(y)\le c_{s,n}e^{-(n-1)\frac{r}{s'+1}}w(F)^{\frac{1}{s'+1}}M_{s}w(E)^{\frac{s'}{s'+1}},
\]

where $s'=\frac{s}{s-1}$ and $c_{s,n}$ is a constant depending on
$s$ and the dimension $n$.
\end{lem}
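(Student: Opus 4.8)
The plan is to estimate the left-hand side by splitting the integral according to the distance $d_n(x,y)$ between the center $y \in F$ and the center $x$ of a ball realizing the average. More concretely, for fixed $r$ I want to understand $A_r(\chi_E)(y) = \mu_n(B_H(y,r)\cap E)/\mu_n(B_H(y,r))$, so I first replace the denominator using \eqref{lanza la bola chico}, which for $r>2$ gives $\mu_n(B_H(y,r)) \sim_n e^{(n-1)r}$. Thus $A_r(\chi_E)(y) \lesssim_n e^{-(n-1)r}\,\mu_n(B_H(y,r)\cap E)$, and the task reduces to bounding $\int_F e^{-(n-1)r}\mu_n(B_H(y,r)\cap E)\, w(y)\, d\mu_n(y)$.

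The key device is Proposition \ref{intersection of balls}, which controls $\mu_n(B_H(y,r)\cap B_H(x,r'))$ by $C_n e^{\frac{n-1}{2}(r+r'-d_n(x,y))}$. To exploit this I would cover $E$ (and likewise keep track of $F$) by a well-separated family of unit balls, i.e. a maximal $1$-separated net, so that $E$ is essentially partitioned into pieces each sitting inside a ball of radius $1$ around a net point. Then $\mu_n(B_H(y,r)\cap E)$ decomposes as a sum over net points $z$, and for each the intersection with $B_H(z,1)$ is nonzero only when $d_n(y,z)\le r+1$, in which case Proposition \ref{intersection of balls} yields a factor $e^{\frac{n-1}{2}(r + 1 - d_n(y,z))}$. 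This geometric decay in $d_n(y,z)$ is exactly what makes the sum summable and produces the gain $e^{-(n-1)r/(s'+1)}$.

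Next I would bring in the weight via Hölder's inequality with exponents $s$ and $s'$. After writing everything as a double integral (or integral against the net measure) over $(y,z)$ with $y\in F$, I separate $w(y)$ from the mass of $w$ on $E$ near $z$, pairing the latter with the averaging structure so that an average of $w^s$ appears; this is precisely where $M_s w = M(w^s)^{1/s}$ enters, converting local averages of $w^s$ over balls into pointwise values of $M_s w$. The Hölder split should be arranged so that one factor collects $w(F)$ to the power $\frac{1}{s'+1}$ and the other collects $M_s w(E)$ to the power $\frac{s'}{s'+1}$; the exponents $\frac{1}{s'+1}$ and $\frac{s'}{s'+1}$ summing to $1$ is the signature of balancing a two-parameter estimate, and I would choose the Hölder weights (or an auxiliary parameter analogous to $\rho$ in Lemma \ref{opti}) to optimize and recover exactly these powers together with the stated exponential rate.

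The main obstacle I anticipate is the bookkeeping that fuses the exponential decay $e^{\frac{n-1}{2}(r+1-d_n(y,z))}$ from Proposition \ref{intersection of balls} with the Hölder splitting so that the net sum over $z$ converges and delivers the sharp exponent $-\frac{n-1}{s'+1}r$ rather than a weaker rate. The difficulty is that the decay rate $\frac{n-1}{2}$ in the geometric estimate is fixed, while the target rate $\frac{n-1}{s'+1}$ depends on $s$; reconciling these requires distributing the available exponential decay carefully between the summation in the net and the Hölder exponents, and controlling the overlap/multiplicity of the net balls so that the sum $\sum_z e^{-c\, d_n(y,z)}$ over a $1$-separated set in $\H^n$ stays bounded despite the exponential volume growth. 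Handling this summation correctly—using that a $1$-separated net meets each annulus $\{d_n(y,\cdot)\approx k\}$ in at most $\sim e^{(n-1)k}$ points, which must be beaten by the decay—is the crux, and the constant $c_{s,n}$ blowing up as $s\to 1$ should emerge naturally from this balance.
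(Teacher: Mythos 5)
There is a genuine gap, and it sits exactly where you flagged ``the crux.'' Your decomposition of $E$ over a $1$-separated net of unit balls makes Proposition \ref{intersection of balls} powerless: for a net point $z$ with $d_n(y,z)\le r-1$ the unit ball $B_H(z,1)$ lies entirely inside $B_H(y,r)$, so the best available bound for $\mu_n(B_H(y,r)\cap B_H(z,1)\cap E)$ is the trivial one $\mu_n(B_H(z,1)\cap E)\le C_n$, while the factor $e^{\frac{n-1}{2}(r+1-d_n(y,z))}$ from the proposition is \emph{larger} than this constant throughout that range (it only becomes small when $d_n(y,z)>r+1$, where the intersection is empty anyway). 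The proposition gains something only when \emph{both} radii are large. Moreover, even taking the exponential factor at face value, the summation cannot be saved by careful bookkeeping: a $1$-separated net has $\sim e^{(n-1)k}$ points at distance $k$ from $y$, against decay $e^{-\frac{n-1}{2}k}$ per point, so $\sum_{z}e^{\frac{n-1}{2}(r+1-d_n(y,z))}\sim e^{\frac{n-1}{2}(r+1)}\sum_{k\le r+1}e^{\frac{n-1}{2}k}\sim e^{(n-1)r}$, which is exactly the trivial bound $\mu_n(B_H(y,r))$. The growth beats the decay by $e^{\frac{n-1}{2}k}$; this is structural, not a matter of distributing exponents.

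The paper's proof avoids this by decomposing \emph{both} $E$ and $F$ into radial annuli $\cC_j$, $\cC_\ell$ about a fixed origin, and estimating each piece $I_{j,\ell}=\int_{F_\ell}A_r(\chi_{E_j})\,w\,d\mu_n$ in \emph{two} complementary ways: for $x\in E_j$ the set $\{y\in F_\ell:d(x,y)\le r\}$ sits inside $B_H(0,\ell)\cap B_H(x,r)$ (two balls of large radii, where Proposition \ref{intersection of balls} genuinely gains), which after H\"older with exponents $s,s'$ gives $I_{j,\ell}\lesssim e^{-(n-1)r}e^{\frac{n-1}{2s'}(\ell+r-j)}e^{\frac{(n-1)r}{s}}M_s w(E_j)$; symmetrically, for $y\in F_\ell$ one gets $I_{j,\ell}\lesssim e^{-(n-1)r}e^{\frac{n-1}{2}(j+r-\ell)}w(F_\ell)$. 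Taking the minimum of these two bounds and feeding the weighted sums $\sum_j e^{(n-1)j/s'}c_j=M_sw(E)$, $\sum_\ell e^{(n-1)\ell}d_\ell=w(F)$ into the arithmetic optimization Lemma \ref{opti} (with $\kappa=e^{n-1}$, $p=1$, $\delta=1/s$) is what produces the exponents $\frac{1}{s'+1}$, $\frac{s'}{s'+1}$ and the rate $e^{-(n-1)r/(s'+1)}$. Your proposal contains only one of these two estimates (the $E$/H\"older side), with the role of $F$ left vague; without the min of two competing bounds indexed by the annular parameters $j,\ell$, there is nothing to optimize, and the product structure $w(F)^{\frac{1}{s'+1}}M_sw(E)^{\frac{s'}{s'+1}}$ cannot emerge.
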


\begin{proof}
    
We divide the hyperbolic $\H^n$ in level sets as follows
$$
\H^n=\bigcup_{j=1}^{\infty}\cC_{j},
$$
where $\cC_j=\{x\in \H^n: j-1\leq \sub{d}{H}(0,x)<j \}$. Let $E_{j}=E\cap \cC_{j}$ and $F_{\ell}=F\cap \cC_{\ell}$.
Hence, we can write 

\begin{align}
I:=\int_{F}A_r(\chi_E)(y) w(y) d\mu_n(y)
 & = \ \ \,\sum_{\ell,j\geq 0}\ \ \,
 \int_{F_\ell}A_r(\chi_{E_j})(y) w(y) d\mu_n(y).
\label{eq:levels}
\end{align}

\medskip

Now, we will estimate the integrals
$$
I_{j,\ell}:=\int_{F_\ell}A_r(\chi_{E_j})(y) w(y) d\mu_n(y)
$$
in two different ways. On the one hand, given $x\in E_j$, let 
$$
\W_{j,\ell}^x=\{y\in F_\ell: \ d(x,y)\leq r\}.
$$ 
Then, by Lemma \ref{intersection of balls}

$$
\mu_n(\W_{j,\ell}^x)\leq C_n e^{\frac{n-1}{2}(\ell+r-j)}.
$$

\medskip

Using this estimate, we obtain that
\begin{align*}
I_{j,\ell}&=e^{-(n-1)r} \int_{F_\ell}\int_{B(y,r)}\chi_{E_j}(x)\,d\mu_n(x) w(y) d\mu_n(y)\\
&=e^{-(n-1)r} \int_{E_j} \int_{\W_{j,\ell}^x} w(y) d\mu_n(y)\, d\mu_n(x)\\
&=e^{-(n-1)r} \int_{E_j} \left(\int_{\W_{j,\ell}^x}\,d\mu_n\right)^{\frac{1}{s'}} \left(\int_{{B}_{H}(x,r)} w^s(y)\,d\mu_n(y)\right)^{\frac{1}{s}}\,d\mu_n(x)\\
&\leq C_n  e^{-(n-1)r} e^{\frac{n-1}{2s'}(\ell+r-j)}\, e^{\frac{(n-1)r}{s}}M_s(w)(E_j).
\end{align*}

\medskip
On the other hand, if $y\in F_\ell$, let $\W_{j,\ell}^y=\{x\in E_j: \ d(x,y)\leq r\}$. Then, by Lemma \ref{intersection of balls}

\begin{align*}
I_{j,\ell}&= e^{-(n-1)r}\int_{F_\ell} \int_{\W_{j,\ell}^y}  d\mu_n(x)\, w(y)d\mu_n(y)\\
&\leq C_n e^{-(n-1)r} e^{\frac{n-1}{2}(j+r-\ell)} \,w(F_\ell).
\end{align*}

In consequence
$$
I_{j,\ell}\leq C_n e^{-(n-1)r}\min\Big\{e^{\frac{n-1}{2s'}(\ell+r-j)}\, e^{\frac{(n-1)r}{s}}M_s(w)(E_j), e^{\frac{n-1}{2}(j+r-\ell)} \,w(F_\ell) \Big\},
$$
and
\begin{align*}
I&\leq C_n e^{-(n-1)r}\sum_{|\ell-j|\leq r+2} \min\Big\{e^{\frac{n-1}{2s'}(\ell+r-j)}\, e^{\frac{(n-1)r}{s}}M_s(w)(E_j), e^{\frac{n-1}{2}(j+r-\ell)} \,w(F_\ell) \Big\}.\\
\end{align*}

Now, if we define $c_{j}=\frac{M_{s}^\circ w(E_{j})}{e^{(n-1)\frac{j}{s'}}}$
and $d_{l}=\frac{w(F_{l})}{e^{(n-1)l}}$.  We have that 
\begin{equation}\label{series}
\sum_{j=0}^{\infty}e^{(n-1)\frac{j}{s'}}c_{j}={M_{s}^\circ}w(E)\quad\mathrm{and}\quad\sum_{j=0}^{\infty}e^{(n-1)l}d_{j}=w(F),
\end{equation}
and
\begin{align*}
\min&\Big\{e^{\frac{n-1}{2s'}(\ell+r-j)}\, e^{\frac{(n-1)r}{s}}M_s(w)(E_j), e^{\frac{n-1}{2}(j+r-\ell)} \,w(F_\ell) \Big\}\\
 &=\min\left\{ e^{\frac{(n-1)r}{s}}e^{(n-1)\frac{(l+j+r)}{2s'}}c_{j},e^{(n-1)\frac{l+j+r}{2}}d_{l}\right\}   
\end{align*}

Then we have that

\begin{equation} \label{Iest}
I\lesssim e^{-(n-1)r} \sum_{\substack{l,j\in\N\cup\{0\}}} \min\left\{ e^{\frac{(n-1)r}{s}}e^{(n-1)\frac{(l+j+r)}{2s'}}c_{j},e^{(n-1)\frac{l+j+r}{2}}d_{l}\right\}.  
\end{equation}
Now, if we choose $\delta=\frac{1}{s}$ and $p=1$ (then $p-\delta=\frac{1}{s'}$) we have that

$$
 \min\left\{ e^{\frac{(n-1)r}{s}}e^{(n-1)\frac{(l+j+r)}{2s'}}c_{j},e^{(n-1)\frac{l+j+r}{2}}d_{l}\right\} 
 $$
 is equal to
 $$
 \min\left\{ e^{(n-1)\delta r}e^{(n-1)\frac{(l+j+r)(p-\delta)}{2}}c_{j},e^{(n-1)\frac{l+j+r}{2}}d_{l}\right\}. 
$$

Therefore, if $\kappa=e^{n-1}$ and we take into account (\ref{series}), applying Lemma \ref{opti} in (\ref{Iest}) we get 

$$I\lesssim e^{-(n-1)\frac{r}{s'+1}}w(F)^{\frac{1}{s'+1}}M_{s}w(E)^{\frac{s'}{s'+1}}.$$ 
\end{proof}

We can use Lemma \ref{Borders} to obtain a distributional estimate on $A_{r}$. 
\begin{lem}
\label{lem:sumLevels} Let {$r\geq 1$} and $\lambda>0$. Then
\[
w\left(\left\{ A_{r} (A_1f) \geq\lambda\right\} \right)\lesssim c_{s}\sum_{k=0}^{r}\left(\frac{e^{(n-1)k}}{e^{(n-1)r}}\right)^{\frac{1}{2s'}}e^{(n-1)k}M_{s}w\left(\left\{ |A_2f|\geq \eta e^{(n-1)k}\right\} \right),
\]
where {$c_s$ depends only on $s$ and} $c_{s}\rightarrow\infty$ when $s\rightarrow1$.
\end{lem}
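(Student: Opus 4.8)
The plan is to combine a layer-cake decomposition of the inner average $A_1f$ with the border estimate of Lemma \ref{Borders}. Since $A_r$, $A_1$ and $A_2$ are all positively homogeneous of degree one in $f$, I would first normalise $\lambda=1$ (this only rescales the thresholds $\eta e^{(n-1)k}$), so that it suffices to bound $w(F)$ with $F=\{A_r(A_1f)\ge 1\}$. The second preliminary ingredient is the comparison of the two local averages furnished by \eqref{lanza la bola chico}: the ratio $\mu_n(B_H(\cdot,2))/\mu_n(B_H(\cdot,1))$ is a dimensional constant, and since $B_H(y,1)\subseteq B_H(y,2)$ one gets the pointwise bound $A_1f(y)\le C_n\,A_2f(y)$, while $B_H(y,1)\subseteq B_H(z,2)$ whenever $d_n(y,z)\le 1$ gives the spreading property $A_2f(z)\ge c_n\,A_1f(y)$ for every $z\in B_H(y,1)$. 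The spreading property is what replaces the discrete combinatorics of \cite{NT}: it ensures that the superlevel sets of $A_2f$ have positive measure and thicken those of $A_1f$, compensating for the non-atomicity of $\mu_n$.

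Next I would decompose $A_1f=\sum_{k\ge 0}A_1f\,\chi_{\Delta_k}$, where $\Delta_k=\{\eta e^{(n-1)k}\le A_2f<\eta e^{(n-1)(k+1)}\}\subseteq O_k:=\{A_2f\ge \eta e^{(n-1)k}\}$ and, on $\Delta_k$, one has $A_1f\le C_n\eta e^{(n-1)(k+1)}$. Choosing $\eta$ small enough that $A_r$ of the leftover piece $\{A_2f<\eta\}$ is at most $\tfrac12$ on $F$, the defining inequality turns into $1\lesssim \eta\sum_{k\ge 0}e^{(n-1)k}A_r(\chi_{\Delta_k})(x)$ for $x\in F$. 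Multiplying by $w$ and integrating over $F$ reduces the whole statement to controlling
\[
w(F)\ \lesssim\ \sum_{k\ge 0}e^{(n-1)k}\int_F A_r(\chi_{\Delta_k})(y)\,w(y)\,d\mu_n(y),
\]
and each integral on the right is exactly of the type estimated in Lemma \ref{Borders} with $E=\Delta_k$, $F=F$.

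The delicate point, and the main obstacle, is to convert this into the stated sum with the sharp geometric factor $(e^{(n-1)k}/e^{(n-1)r})^{1/2s'}$ and the truncation at $k=r$. The exponent $\tfrac{1}{2s'}$ signals that the fully optimised form of Lemma \ref{Borders} is too lossy here, and that one should instead reopen its proof and keep $M_sw$ linear: the factor $\tfrac{1}{2s'}$ is precisely what the one-sided bound produces, coming from Hölder with exponent $s$ applied to the ball-intersection estimate of Proposition \ref{intersection of balls}, whose exponent $\tfrac{n-1}{2}$ is halved once more by $\tfrac1{s'}$. I would therefore split $\int_F A_r(\chi_{\Delta_k})\,w$ over the hyperbolic shells $\cC_j$, noting that by Proposition \ref{intersection of balls} a ball of radius $r$ links only shells within hyperbolic distance $r$, and then balance, shell by shell, the two orientations of the ball-intersection bound (the $M_sw$-side and the $w(F)$-side from the proof of Lemma \ref{Borders}) exactly as in the minimisation of Lemma \ref{opti}. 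It is this balancing, together with the additivity of $E\mapsto M_sw(E)=\int_E M_sw\,d\mu_n$, that should collapse the contribution of each $\Delta_k$ to $e^{\frac{n-1}{2s'}(k-r)}M_sw(O_k)$ over the admissible range $0\le k\le r$, while the $w(F)$-side terms are absorbed into the left-hand side. The genuine difficulty is the simultaneous bookkeeping of two independent scales, the shell index forced by the exponential growth \eqref{lanza la bola chico} of $\mu_n$ and the magnitude index $k$ of $A_2f$, and choosing the favourable orientation of the intersection estimate so as to gain rather than lose the factor $e^{\frac{n-1}{2s'}(k-r)}$; the blow-up of $c_s$ as $s\to 1$ then appears because $s'\to\infty$, so that the common ratios $e^{(n-1)/2s'}$ of these geometric series tend to $1$ and the sums diverge.
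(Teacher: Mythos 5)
Your skeleton for the moderate levels is essentially workable, though your diagnosis of how to use Lemma \ref{Borders} is off. For $0\le k\le r$ there is no need to ``reopen'' its proof: applying it as a black box with $E=\Delta_k$, multiplying by $e^{(n-1)k}$, and then using Young's inequality $a^{\frac{1}{s'+1}}b^{\frac{s'}{s'+1}}\le \epsilon_k\,a+\epsilon_k^{-1/s'}b$ with $\epsilon_k\simeq \delta\, e^{-(n-1)(r-k)/2}$ lets you absorb $\sum_{k\le r}\epsilon_k\,w(F)$ into the left-hand side (after truncating to $F\cap B(0,N)$ so that $w(F)<\infty$) and produces exactly the coefficient $\bigl(e^{(n-1)k}/e^{(n-1)r}\bigr)^{\frac{1}{2s'}}e^{(n-1)k}$ in front of $M_sw(\Delta_k)$. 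The paper reaches the same coefficient by a different mechanism that avoids absorption altogether: it decomposes by levels of $f_1=A_1f$ truncated at $k=r$, pigeonholes pointwise (if $\sum_{k=0}^{r}e^{(n-1)k}A_r(\chi_{E_k})\ge \frac1e$ then some $A_r(\chi_{E_k})$ exceeds a threshold already carrying the factor $(e^{(n-1)k}/e^{(n-1)r})^{\beta}$ with $\beta=\frac{1}{2(s'+1)}$), covers $F$ by the resulting sets $F_k$, and solves the inequality of Lemma \ref{Borders} for $w(F_k)$.

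The genuine gap is the range $k>r$, which your proposal sums over but never controls. Your reduction $w(F)\lesssim\sum_{k\ge 0}e^{(n-1)k}\int_F A_r(\chi_{\Delta_k})\,w\,d\mu_n$ is true, but for $k>r$ its terms cannot be dominated by the claimed right-hand side: the best term-by-term estimate (self-adjointness of $A_r$) gives $e^{(n-1)k}M_sw(\Delta_k)$, and if $A_2f$ is a single spike of height $\eta e^{(n-1)K}$ on a small set $S$ with $K\gg r$, this exceeds the target bound, whose largest admissible coefficient is $e^{(n-1)r}M_sw(S)$, by the unbounded factor $e^{(n-1)(K-r)}$; no choice of balancing parameters repairs this, because multiplying by the height $e^{(n-1)k}$ is already too lossy there. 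For these levels one must use the support instead of the size: $A_r$ of the tail piece is nonzero only when $\sub{B}{H}(x,r)$ meets the tail set. This is precisely where the spreading property you state (but never deploy) enters in the paper: splitting off $f_1\chi_{\{f_1\ge\frac12 e^{(n-1)r}\}}$ \emph{before} integrating, one notes that $f_1(y)\ge e^{(n-1)r}$ forces $\sub{B}{H}(y,1)\subseteq\{f_2\ge c\,e^{(n-1)r}\}$, hence $A_{r+1}\bigl(\chi_{\{f_2\ge c e^{(n-1)r}\}}\bigr)(x)\gtrsim e^{-(n-1)r}$ on the set in question; Chebyshev plus $\int A_{r+1}(\chi_E)\,w\,d\mu_n=\int_E A_{r+1}w\,d\mu_n\le Mw(E)$ then bound its $w$-measure by $e^{(n-1)r}Mw(\{f_2\ge c e^{(n-1)r}\})$, which is dominated by the $k=r$ term of the stated sum. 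Without this separate tail argument your proof does not close.
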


\begin{proof}[Proof of Lemma \ref{lem:sumLevels}]
Let $f_1=A_1 f$. We bound 
\begin{equation}
f_1\leq\frac{1}{e}+\sum_{k=0}^{r}
e^{(n-1)k}\chi_{E_{k}}+f_1\chi_{\{f_1\geq\frac{1}{2}e^{(n-1)r}\}},\label{eq:troceado}
\end{equation}
where $E_{k}$ is the sublevel set 
\begin{equation}
E_{k}=\left\{ e^{(n-1)(k-1)}\leq f_1<e^{(n-1)k}\right\} .\label{eq:niveles}
\end{equation}
Hence 
\begin{equation}
A_{r}f_1\leq\frac{1}{e}+\sum_{k=0}^{r}e^{(n-1)k}A_{r}\left(\chi_{E_{k}}\right)+A_{r}\left(f_1\chi_{\{f_1\geq\frac{1}{2}e^{(n-1)r}\}}\right).\label{eq:troceadopromediado}
\end{equation}
Given any $\la>0$ 
\begin{align*}
w\left(      \left\{ A_{r} \left(f_1\chi_{   \{f_1\geq e^{(n-1)r}\}  }\right)>\la   \right\}   \right)&\leq w\left(      \left\{ A_{r} \left(f_1\chi_{   \{f_1\geq e^{(n-1)r}\}  }\right)\neq 0   \right\}   \right)\\
&\leq w\left(      \left\{ x: \sub{B}{H}(r,x)\cap \{f_1\geq e^{(n-1)r}\}  \neq \varnothing   \right\}   \right).
\end{align*}

\medskip

Take $x$ such that $\sub{B}{H}(x,r)\cap \{f_1\geq e^{(n-1)r}\neq \varnothing$, and let $y$ be an element of this intersection. It is not difficult to see that
$$
 \sub{B}{H}(y,1)\subseteq \sub{B}{H}(x,r+1)\cap \big\{f_2\geq c e^{(n-1)r} \big\},
$$
where $f_2=A_2f$ and $c_0=\frac{\mu_n(B(0,1))}{\mu_n(B(0,2))}$. Therefore
\begin{align*}
w\left( \left\{ x: \sub{B}{H}(r,x)\cap \{f_1\geq e^{(n-1)r}\}  \neq \varnothing   \right\}   \right) &\leq w\Big( \Big\{A_{r+1} \left(\chi_{   \{f_2\geq c e^{(n-1)r}\}  }\right)>\frac{1}{c_1 e^{(n-1)r}}\Big\}\Big)\\
&\leq c_1 e^{(n-1)r} \int_{H_n} A_{r+1} \left(\chi_{   \{f_2\geq c e^{(n-1)r}\}  }\right) w d\mu \\
&\leq c_1 e^{(n-1)r} M(w) \left(\chi_{   \{f_2\geq c e^{(n-1)r}\}  }\right).
\end{align*}

On the other hand,
let $\beta\in (0,1)$ that will be chosen {later}. Note that if 
\[
\sum_{k=0}^{r}e^{(n-1)k}A_{r}\left(\chi_{E_{k}}\right)\geq\frac{1}{e},
\]
then we necessarily have {some} $k\in\N$ {such} that $1\leq k \leq r$ {for which}
\[
A_{r}\left(\chi_{E_{k}}\right)\geq\frac{e ^{(n-1)\beta}-1}{e^{(n-1)(k+2)}}\left(\frac{e^{(n-1)k}}{e^{(n-1)r}}\right)^{\beta}.
\]
Indeed, otherwise we have that
\begin{align*}
\frac{1}{e} & \leq\sum_{k=0}^{r}e^{(n-1)k}A_{r}\left(\chi_{E_{k}}\right)< 
\frac{e^{(n-1)\beta}-1}{e^{(n-1)(\beta r+2)}} 
\sum_{k=0}^{r} e^{(n-1)\beta k }\\
&=\frac{e^{(n-1)\beta}-1}{e^{(n-1)(\beta r+2)}}\  \frac{e^{(n-1)\beta(r+1)}-1}{e^{(n-1)\beta}-1}<\frac1e,
\end{align*}

which is a contradiction. Thus 
\begin{align*}
w\left(A_{r}f_1\geq{1}\right)\leq\sum_{k=0}^{r} w(F_{k})+ c_1 e^{(n-1)r} M(w) \left(\chi_{   \{f_2\geq c e^{(n-1)r}\}  }\right), 
\end{align*}
where 
\[
F_{k}=\left\{ A_{r}\left(\chi_{E_{k}}\right)\geq\frac{e ^{(n-1)\beta}-1}{e^{(n-1)(k+2)}}\left(\frac{e^{(n-1)k}}{e^{(n-1)r}}\right)^{\beta}\right\} .
\]
Note that $F_{k}$ has finite measure, and

\begin{align*}
w(F_k) \frac{e ^{(n-1)\beta}-1}{e^{(n-1)(k+2)}}\left(\frac{e^{(n-1)k}}{e^{(n-1)r}}\right)^{\beta}\leq \int_{F_k} A_r ( \chi_{E_k}) wd\mu_n(x).
\end{align*}

On the other hand, by Lemma \ref{Borders}{,}
\[
\int_{F_k} A_r ( \chi_{E_k}) wd\mu_n(x)\leq c_{s}e^{-(n-1)\frac{r}{s'+1}}w(F_{k})^{\frac{1}{s'+1}}M_{s}w(E_{k})^{\frac{s'}{s'+1}}.
\]
Hence
\[
w(F_{k})\frac{e ^{(n-1)\beta}-1}{e^{(n-1)(k+2)}}\left(\frac{e^{(n-1)k}}{e^{(n-1)r}}\right)^{\beta}\leq c_{s}e^{-(n-1)\frac{r}{s'+1}}w(F_{n})^{\frac{1}{s'+1}}{M_{s}}w(E_{n})^{\frac{s'}{s'+1}}.
\]

\medskip

So, choosing $\beta=\frac{1}{2(s'+1)}$ we have that
\begin{align*}
w(F_{k}) &\leq c_{s} e^{-(n-1)\frac{r}{2s'}}e^{\frac{(n-1)k}{2s'}}e^{(n-1)k}{M_{s}}w(E_{n})\\
        &\leq c_s\left(\frac{e^{(n-1)k}}{e^{(n-1)r}}\right)^{\frac{1}{2s'}}e^{(n-1)k}M_s w\left(\left\{f_1\geq e^{(n-1)(k-1)}\right\}\right).   
\end{align*}

Therefore
\begin{align}
  w(\{A_r f_1\geq 1\}) &\leq c_s\sum_{k=0}^{r} c_s\left(\frac{e^{(n-1)k}}{e^{(n-1)r}}\right)^{\frac{1}{2s'}}e^{(n-1)k}M_sw\left(\left\{f_1\geq e^{(n-1)(k-1)}\right\}\right)\nonumber\\
  &+ c_1 e^{(n-1)r} M(w) \left(\chi_{   \{f_2\geq c e^{(n-1)r}\}  }\right). \label{casi casi}
\end{align}
So, there exists $\eta>0$ depending only on the dimension such that
\begin{align*}
  w(\{A_r f_1\geq 1\}) &\leq \tilde{c}_s\sum_{k=0}^{r} \left(\frac{e^{(n-1)k}}{e^{(n-1)r}}\right)^{\frac{1}{2s'}}e^{(n-1)k}M_sw\left(\left\{f_2\geq \eta e^{(n-1)(k-1)}\right\}\right).
\end{align*}

Indeed, note that in the right-hand side of \eqref{casi casi}, the second term is dominated by the last term of the sum. This yields the desired conclusion.
\end{proof}

Combining the ingredients above we are in position to settle Theorem \ref{F-S}.

\begin{proof}[Proof of Theorem \ref{F-S}]
By the discussion in the introduction we only need to argue for $M^{far}(f)(x)$. Then, by Lemma~\ref{lem:sumLevels}
implies that 

\begin{align*}
w\Big(M^{far} f&\geq\lambda\Big)  \leq  w\left(M^{far} f_1\geq\lambda\right)\\
& \leq\sum_{r=1}^{\infty}w\left(A_{r} f_1\geq\lambda\right)\\
&=  \tilde{c}_s \sum_{r=0}^{\infty}\sum_{k=0}^{r} \left(\frac{e^{(n-1)k}}{e^{(n-1)r}}\right)^{\frac{1}{2s'}}e^{(n-1)k}M_sw\left(\left\{f_2\geq e^{(n-1)(k-1)}\eta \la\right\}\right)\\
&=  \tilde{c}_s\int_{\cH_n} \sum_{r=0}^{\infty}\sum_{k=0}^{r} \left(\frac{e^{(n-1)k}}{e^{(n-1)r}}\right)^{\frac{1}{2s'}}e^{(n-1)k} \chi_{\{f_2\geq e^{(n-1)(k-1)\}}\eta \la\}} M_sw(x)  d\mu_n(x)\\
&=  \tilde{c}_s\int_{\cH_n} \sum_{k=0}^{\infty}\sum_{r=k}^{\infty} \left(\frac{e^{(n-1)k}}{e^{(n-1)r}}\right)^{\frac{1}{2s'}}e^{(n-1)k} \chi_{\{f_2\geq e^{(n-1)(k-1)\}}\eta \la\}} M_sw(x)  d\mu_n(x)\\
&=  \tilde{c}_s\int_{\cH_n} \sum_{k=0}^{\infty}e^{(n-1)k} \chi_{\{f_2\geq e^{(n-1)(k-1)\}}\eta \la\}} M_sw(x)  d\mu_n(x)\\
&\leq \frac{\hat{c}_s}{\eta \la}\int_{\cH_n} f_2(x) M_sw(x)  d\mu_n(x)\\
&=\frac{\hat{c}_s}{\eta \la}\int_{\cH_n} f(x) A_2(M_sw)(x)  d\mu_n(x).
\end{align*}
Now, if $w$ is identically $1$ we have $A_2(M_sw)(x)=1$ and we are done. In particular, this recovers the Str\"omberg's weak type $(1,1)$ estimate. If $w$ is not constant, we claim that 
\begin{align*}
A_2\left((M_sw)\right)(x) \lesssim_{s} M_sw(x).
\end{align*}  
Indeed,
\begin{align*}
\frac{1}{\mu_n(B(x,2))} \int_{B(x,2)} M_sw(y) d\mu_n(y)&\leq
\frac{1}{\mu_n(B(x,2))} \int_{B(x,2)} M(w^s\chi_{B(x,4)}(y))^{\frac1s} d\mu_n(y) \\
&+ \frac{1}{\mu_n(B(x,2))} \int_{B(x,2)} M(w^s\chi_{(B(x,4))^c}(y))^{\frac1s} d\mu_n(y).
\end{align*}
The second term in the last line can be controlled by  $c M_s(w)(x)$ because  
$$
M(w^s\chi_{(B(x,4))^c}(y))^{\frac1s}\sim M(w^s\chi_{(B(x,4))^c}(x))^{\frac1s},
$$ 
for every $y\in B(x,2)$. Using Kolmogorov's inequality and the weak type $(1,1)$ of $M$ the first term can be estimate by $c_{\beta} (A_4(w^s)(x))^{\frac1s} $ and the claim follows. This completes the proof in the general case.  
\end{proof}

\bigskip

\subsection{Proof of Theorem \ref{p>1}}\label{proof of p>1}

The proof of Theorem \ref{p>1} follows the same ideas of the proof of Theorem 1.1. in \cite{OR}.  First, the hypothesis $w\in A_{p,loc}(\H^n)$ implies the estimates for $M^{loc}$ by standard arguments as in the classical setting. On the other hand, the arguments used to prove that Lemma \ref{Borders} implies Lemma \ref{lem:sumLevels} can be used to prove that the hypothesis in Theorem \ref{p>1} implies that 

\begin{equation}\label{redu}
w\left(\left\{ A_{r} (A_1f) \geq\lambda\right\} \right)\lesssim c_{s}\sum_{k=0}^{r}\left(\frac{e^{(n-1)k}}{e^{(n-1)r}}\right)^{\frac{1-\beta}{2}\frac{p}{\alpha}}e^{(n-1)\beta\frac{p}{\alpha} k}w\left(\left\{ |A_2f|\geq \eta e^{(n-1)k} \lambda\right\} \right).
\end{equation}

This inequality shows that the case $\beta<\alpha$ produces a better estimate than the case $\beta=\alpha$. First of all, assume that we are in the worst case $\beta=\alpha$. Arguing as in the proof of Theorem \ref{F-S} we get 

$$
w\left(\left\{ M^{far}f(x) \geq\lambda\right\} \right)\lesssim \frac{c}{\lambda^{p}} \int_{\H^n} |A_2(f)(x)|^{p} w(x)d\mu_n(x)dx. 
$$  

Since $|A_2(f)(x)|\leq M^{loc}f(x)$ and $w\in A_{p,loc}(\H^n)$, paying a constant we can eliminate $A_2$ in the right hand side of the previous estimate, and the proof is complete in this case. If we assume that $\beta<\alpha$, then by \eqref{redu} we have that

\begin{align*}
\|A_{r}f\|_{L^{p}(w)}^{p} & =p\int_{0}^{\infty}\lambda^{p-1}w\left(A_{r}f\geq\lambda\right)d\lambda\\
  & \lesssim \sum_{k=0}^{r} \left(\frac{e^{(n-1)k}}{e^{(n-1)r}}\right)^{\frac{1-\beta}{2}\frac{p}{\alpha}}e^{(n-1)\beta\frac{p}{\alpha} k}\int_{0}^{\infty}\lambda^{p-1}w\left(\left\{ |A_2f|\geq \eta e^{(n-1)k} \lambda\right\} \right)\\
 & =\sum_{k=0}^{r} \left(\frac{e^{(n-1)k}}{e^{(n-1)r}}\right)^{\frac{1-\beta}{2}\frac{p}{\alpha}}e^{(n-1)\beta\frac{p}{\alpha} k} e^{-(n-1)kp} \|A_2f\|^{p}_{L^{p}(w)}\\
& \lesssim e^{(n-1)rp(\frac{\beta}{\alpha}-1)} \|A_2f\|^{p}_{L^{p}(w)}.
\end{align*}
Since $w\in A_{p,loc}(\H^n)$ we can eliminate $A_2$ in the last norm, and taking into account that $\frac{\beta}{\alpha}-1<0$, we have that 
\[
\sum_{r=1}^{\infty} r^{\gamma} \|A_{r}f\|_{L^{p}(w)}\lesssim \sum_{r=1}^{\infty}  r^{\gamma} e^{(n-1)rp(\frac{\beta}{\alpha}-1)} \|f\|_{L^{p}(w)}\sim_{\gamma,\alpha,\beta, p} \|f\|_{L^{p}(w)}.
\]
This leads to (\ref{sumAj}). From (\ref{sumAj}) and the fact that $\sum_{j=1}^{\infty}A_j(f)$ is self-adjoint ($\gamma=0$) we obtain the boundedness of $M^{far}$ in the spaces $L^p(w)$ and $L^{p'}(\sigma)$. Moreover, since $w\in A_{p,loc}(\H^n)$ and therefore $\sigma$ is in $A_{p',loc}(\H^n)$ we have the same inequalities for $M^{loc}$, and as a consequence we obtain  
\begin{align*} \|Mf\|_{L^{p}(w)}&\lesssim\|f\|_{L^{p}(w)} \\  \|Mf\|_{L^{p'}(\sigma)}&\lesssim\|f\|_{L^{p'}(\sigma)}    
\end{align*}

This ends the proof of the Theorem.

\bigskip

\subsection{Proof of Proposition \ref{SuffP}}\label{facilongo}
The proof follows similar ideas as Lemma \ref{Borders}. 

\begin{proof}[Proof of Proposition \ref{SuffP}]   
Given $E,F$ subsets in $\H^n$, we should prove that
\begin{equation}\label{puntual1}
\int_{F} A_r(\chi_E)(y) w(y) d\mu_n(y) \lesssim e^{(n-1)r\left(\frac{p}{p-\delta+1}-1\right)} w(E)^{\frac{1}{p-\delta+1}}w(F)^{1-\frac{1}{p-\delta+1}}.
\end{equation}

Using the same notation as in the Lemma \ref{Borders}, we have

$$I_{j,\ell}:=\int_{F_\ell}A_r(\chi_{E_j})(y) w(y) d\mu_n(y).
$$
Given  $x\in E_j$, let $\W_{j,\ell}^x=\{y\in F_\ell: \ d(x,y)\leq r\}$. Then, by condition \eqref{Suffweaker}

$$
w(\W_{j,\ell}^x)\leq C_n e^{(n-1)\frac{r+l-j}{2}(p-\delta)}e^{(n-1)r\delta}w(x).
$$

Therefore,
\begin{align*}
I_{j,\ell}&=e^{-(n-1)r} \int_{F_\ell}\int_{B(y,r)}\chi_{E_j}(x)\,d\mu(x) w(y) d\mu_n(y)\\
&=e^{-(n-1)r} \int_{E_j} \int_{\W_{j,\ell}^x} w(y) d\mu_n(y)\, d\mu_n(x)\\
&\lesssim  e^{-(n-1)r} e^{(n-1)\frac{r+l-j}{2}(p-\delta)}e^{(n-1)r\delta}w(E_j).
\end{align*}

\medskip
On the other hand, if $y\in F_\ell$, let $\W_{j,\ell}^y=\{x\in E_j: \ d(x,y)\leq r\}$. Then, by Lemma \ref{intersection of balls}

\begin{align*}
I_{j,\ell}&= e^{-(n-1)r}\int_{F_\ell} \int_{\W_{j,\ell}^y}  d\mu_n(x)\, w(y)d\mu_n(y)\\
&\leq C_n e^{-(n-1)r} e^{\frac{n-1}{2}(j+r-\ell)} \,w(F_\ell).
\end{align*}
So, 
$$
I_{j,\ell}\leq C_n e^{-(n-1)r}\min\Big\{e^{(n-1)\frac{r+l-j}{2}(p-\delta)}e^{(n-1)r\delta}w(E_j), e^{\frac{n-1}{2}(j+r-\ell)} \,w(F_\ell) \Big\}.
$$

From now on, we can follow the same steps as in the proof of Lemma 3.2, and using Lemma 3.1 we obtain (\ref{puntual1}).

\end{proof}

\section{Examples}\label{ejemplos varios}

In this last section we show several examples to clarify several points previously mentioned. We omit details since the examples follow from  continue variants of Theorem 1.3 in \cite{OR}.

Let $-\infty<\gamma\leq1$, we denote  
$$
w_{\gamma}(x)=\frac{1}{\big(\,1+\mu_n(\,B(0,d_{H}(0,x)\,)\,\big)^{\gamma}}.
$$ 

\begin{exas}\label{Examples} $\ $

\begin{enumerate}\label{Exa}

\item \label{Ex1} If $0\leq\gamma\leq 1$, then $$M(w_\gamma)(x)\lesssim w_\gamma(x) $$ 

\noindent In particular if $\gamma<1$ taking $s>1$ such that $\gamma s \leq 1$ we have that  $$M_{s}(w_{\gamma})(x)\lesssim  w_{\gamma}(x)$$

\noindent Therefore there are non trivial weights satisfying $M_s(w)\lesssim w$. On the other hand, $Mw_1(x)\lesssim w_1(x)$. However, the weak type $(1,1)$ of $M$ with respect to $w_1$ fails.
In fact, taking $f_k(x)=\chi_{\cC_{k}}(x)$ for $k$ big, it is not difficult to show that $w_{1}\{x: M(f_k)(x)>1/2 \} \ge k$ and the $L^1(w_1)$-norm of $f_k$ is uniformly bounded. In particular, this example shows that in Theorem \ref{F-S} is not possible to put $s=1$. In fact, it is not possible to put any iteration ($M^m(f)=M(M^{m-1}f)$) of $M$ for any fixed natural number $m$.

\item\label{Ex2} Let $p>1$. Then $w_{1-p}(x)$ satisfies the hypothesis of Corollary  \ref{CSU}   and therefore
\[
\|Mf\|_{L^{p,\infty}(w_{1-p})}\lesssim\|f\|_{L^{p}(w_{1-p})}
\]
holds. Nevertheless,  $\|Mf\|_{L^{p}(w_{1-p})}\lesssim\|f\|_{L^{p}(w_{1-p})}$ does not. This can be seen by considering the function $f=\chi_{B(0,1)}$, and taking into account that $w\simeq (M\chi_{B(0,1)})^{1-p}$.

\item \label{Ex3} Fixed $\gamma\in\left(0,1\right)$. We have seen in the item 1 that the maximal function satisfies a weak type $(1,1)$ inequality for this weight. In particular, for every $q>1$,
\[
\|Mf\|_{L^{q}(w_{\gamma})}\lesssim\|f\|_{L^{q}(w_{\gamma})}.
\]
However, it is not difficult to see that, for any fixed $p>1$, it holds that
\[
\sup_{r>0}\frac{1}{\mu_{n}(B(0,r))}\int_{B(0,r)}w_{\gamma}\left(\frac{1}{\mu_{n}(B(0,r))}\int_{B(0,r)}w_{\gamma}^{-\frac{1}{p-1}}\right)^{p-1}=\infty.
\]
This example shows that boundedness of $M$ does not imply the natural condition $A_p$ for any $p>1$ in this setting. In the Euclidean setting in the context of a general measure $\mu$ an example in this line was also obtained by Lerner in \cite{L}.  
\end{enumerate}
\end{exas}

\appendix{

\section{The ball model of the hyperbolic space}\label{ball model}

 Let $\bm{n}=\{x\in\R^n:\ \|x\|<1\}$, where $\|\cdot\|$ denotes the euclidean norm in $\R^n$. In this ball we will consider the following Riemannian structure
$$
ds_x^2(v)=\frac{2\|v\|^2}{(1-\|x\|^2)^2}.
$$
The hyperbolic distance in this model can be computed by 
$$
d_n(x,y)=\arctanh \left(\frac{\|x-y\|}{(1-2\pint{x,y}+\|x\|^2\|y\|^2)^{\frac12}}\right).
$$
\medskip

The group of isometries $\isob{n}$ in this representation coincides with the group of conformal diffeomorphisms from $\bm{n}$ onto itself. For $n=2$, we can identify $\R^2$ with $\C$, and this group is the one generated by:

\begin{itemize}
\item Rotations: $\ds z\mapsto e^{it}z$, $t\in\R$.
\item M\"obius maps: $\ds z\mapsto \frac{z-w}{1-\bar{w}z}$.
\item Conjugation: $\ds z\mapsto \overline{z}$.
\end{itemize}

For dimension $n>2$, recall that, by Liouville's theorem, 
every conformal map between two domains of $\R^n$ has the form
$$
x\mapsto \la A\circ \iota_{x_0,\alpha} (x) +b
$$
where $\la>0$, $b\in\R^n$, $A$ belongs to the orthogonal group $O(n)$, and for $x_0\in\R^n$, $\alpha\in \R$ 
$$
\iota_{x_0,\alpha}(x)=\alpha \frac{x-x_0}{\|x-x_0\|^2}+x_0.
$$
Note that, when $\alpha>0$, the maps $\iota_{x_0,\alpha}$ correspond to a reflection with respect to the sphere 
$$
S^{n-1}(x_0,\alpha)=\{x\in\R^n:\ \|x-x_0\|^2=\alpha\}.
$$
If $\alpha<0$, it is a composition of the inversion with respect to the sphere $S^{n-1}(x_0,-\alpha)$ and the symmetry centered at $x_0$. Using this result, we get that the group $\isob{n}$ consists of the maps
of the form
$$
A\circ \theta
$$
where $A$ belongs to the orthogonal group $O(n)$ and  $\theta$ is either the identity or an inversion with respect to a sphere that intersect orthogonally $\partial \bm{n}$. Recall that we say that two spheres $S_1$ and $S_2$ intersects orthogonally if for every $p\in S_1\cap S_2$  
$$
(T_p S_1)^\bot\, \bot\  (T_p S_2)^\bot.
$$ 
\begin{rem}
This representation is also true for $n=2$. Indeed, on the one hand, the rotations as well as the conjugation belongs to $O(2)$. On the other hand, given $\alpha\in \C$ such that $|\alpha|<1$,  the circle of center $\alpha^{-1}$ and squared radius $|\alpha|^{-2}-1$ is orthogonal to $\partial \bm{2}$, and if $\iota$ denotes the inversion with respect to this circle then
$$
\iota(z)=\frac{\overline{z}-w}{1-\bar{w}\overline{z}}.
$$ 
\end{rem}

In this model, the $r$-dimensional hyperbolic subspaces that contains the origin are precisely the intersection the $r$-dimensional linear subspaces of $\R^d$ with $\bm{n}$. The other ones, are images of these ones by isometries. So, they are $r$-dimensional spheres orthogonal to $\partial \bm{n}$. The orthogonality in this case, as before, is defined in the natural way in terms of the orthogonal complements of the corresponding tangent spaces. 

}



\begin{thebibliography}{99}
\bibitem{BP} Benedetti R., Petronio C.: Lectures on hyperbolic geometry. Universitext. Springer-Verlag, Berlin, 1992.
\bibitem{CMS} Cowling, M.G., Meda, S., Setti, A.G.: A weak type (1,1) estimate for a maximal operator on a group of isometries of a homogeneous tree. Colloq. Math. 118 (2010), 223--232.
\bibitem{CS} Clerc, J. L., Stein E. M.: $L\sp{p}$-multipliers for noncompact symmetric spaces. Proc. Nat. Acad. Sci. U.S.A. 71 (1974), 3911--3912.
\bibitem{FS} Fefferman, C. and Stein, E. M.:  Some maximal inequalities. Amer. J. Math. 93 (1971), 107--115.
\bibitem{GCM}Garc\'{\i}a-Cuerva, J., Martell, J. M.: Two-weight norm inequalities for maximal operators and fractional integrals on non-homogeneous spaces.\newblock {\em Indiana Univ. Math. J.}, 50 (2001), 1241--1280.
\bibitem{Io1} Ionescu, A. D.: Singular integrals on symmetric spaces of real rank one. Duke Math. J. 114 (2002), 101--122.
\bibitem{Io2} Ionescu, A. D.: Singular integrals on symmetric spaces. II. Trans. Amer. Math. Soc. 355 (2003), 3359--3378.
\bibitem{L} Lerner, A. K.: An elementary approach to several results on the Hardy-Littlewood maximal operator. Proc. Amer. Math. Soc. 136 (2008),  2829--2833.
\bibitem{LiLo}Li, H., Lohou\'e, N.: Fonction maximale centr\'ee de Hardy-Littlewood sur les espaces hyperboliques,  Ark. Mat. 50 (2012),  359--378.
\bibitem{Mu} Muckenhoupt, B.: Weighted norm inequalities for the Hardy maximal function. Trans. Am. Math. Soc. 165, (1972), 207--226.
\bibitem{NT} Naor, A., Tao,T.: Random martingales and localization of maximal inequalities. J. Funct. Anal. 259 (2010), 731--779.
\bibitem{NTV} Nazarov, F., Treil, S. Volberg, A.: The $Tb$-theorem on non-homogeneous spaces, Acta Math. 190 (2003), 151-239.
\bibitem{ORS} Ombrosi, S., Rivera-R\'ios, I.P., Safe, M.D.: Fefferman-Stein inequalities for the Hardy-Littlewood maximal function on the infinite rooted $k$-ary tree. Int. Math. Res. Not. IMRN 4 (2021), 2736--2762.
\bibitem{OR} Ombrosi, S., Rivera-R\'ios, I. P.: Weighted $L^p$ estimates on the infinite rooted $k$-ary tree. Math. Ann. 384 (2022), 491-510.
\bibitem{OP}Orobitg, J., P\'{e}rez, C.: {$A_p$} weights for nondoubling measures in {${\bf R}^n$} and applications.
 Trans. Amer. Math. Soc., 354 (2002), 2013--2033.
\bibitem{SW} Stein, E. M.; Wainger, S.: Problems in harmonic analysis related to curvature. Bull. Amer. Math. Soc. 84 (1978), 1239-1295.
\bibitem{Str} Str\"omberg, J. O.: Weak type L1 estimates for maximal functions on noncompact symmetric spaces, Ann. of Math. 114 (1981), 115--126.
\bibitem{TTV}Christoph Thiele C., Treil, S., Volberg, A.: Weighted martingale multipliers in the non-homogeneous setting and outer measure spaces.
 { Adv. Math.}, 285 (2015), 1155--1188.
\bibitem{To} Tolsa, X.: BMO, $H^1$ and Calder\'on-Zygmund operators for nondoubling measures, Math. Ann. 319 (2001), 89--149.
\end{thebibliography}
\end{document}